\documentclass[12pt]{article}
\usepackage[a4paper, margin=2.5cm]{geometry}
\pdfoutput=1
\usepackage{amsmath}
\usepackage[utf8]{inputenc}
\usepackage[UKenglish]{babel}

\usepackage[normalem]{ulem} 
\usepackage[dvipsnames]{xcolor}
\usepackage{aliascnt}
\usepackage{amssymb}
\usepackage{amsthm}
\usepackage{faktor}
\usepackage{mathtools}
\usepackage{amsfonts}
\usepackage{enumerate}
\usepackage{enumitem}
\usepackage{multicol}
\usepackage{float}
\usepackage{tikz}
\usepackage{tikz-cd} 
\usepackage{comment}
\usepackage{cite}
\usepackage[round,comma,sort,
]{natbib}
\usepackage{hyperref}
\hypersetup{
    colorlinks=true,       
    linkcolor=Cerulean,          
    citecolor=Thistle,       
    filecolor=Plum,      
    urlcolor=Orchid          
}

\usepackage[nameinlink]{cleveref}

\DeclareMathOperator{\neutre}{\mathbf{e}}
\DeclareMathOperator{\N}{\mathbb{N}}

\newcommand{\conjugate}[2]{{#2}{#1}{#2}^{-1}}

\newcommand{\edge}[1]{\operatorname{E}(#1)}

\newcommand{\pc}[1]{\operatorname{PC}(#1)}

\DeclareMathOperator{\Supp}{\mathrm{Supp}}
\DeclareMathOperator{\Ribb}{\mathrm{Ribb}}
\DeclareMathOperator{\Conj}{\mathrm{Conj}}

\newcommand{\ver}[1]{\operatorname{V}(#1)}

\newcommand{\Addresses}{{
		\bigskip
		\footnotesize

    María Cumplido, \textsc{Instituto de Matemáticas de la Universidad de Sevilla (IMUS) and Departmento de Álgebra, Facultad de Matemáticas, Universidad de Sevilla. Calle Tarfia s/n 41012, Seville, Spain}\par\nopagebreak
            \textit{E-mail address}: \texttt{cumplido@us.es}

    \medskip
    
    Marina Salamero, \textsc{Instituto de Matemáticas de la Universidad de Sevilla (IMUS) and  Departmento de Álgebra, Facultad de Matemáticas, Universidad de Sevilla. Calle Tarfia s/n 41012, Seville, Spain}\par\nopagebreak
            \textit{E-mail address}: \texttt{msalamero@us.es}
            
    \medskip

    Giovanni Sartori, \textsc{Department of Mathematics, Heriot-Watt University and Max\-well Institute for Mathematical Sciences, Edinburgh, UK}\par\nopagebreak
            \textit{E-mail address}: \texttt{gs2057@hw.ac.uk}

    \medskip

            Mireille Soergel, \textsc{Max Planck Institute for Mathematics in the Sciences, Inselstrasse 22, 04103 Leipzig, Germany}\par\nopagebreak
		\textit{E-mail address}: \texttt{soergel@mis.mpg.de}
		
}}

\title{Parabolic subgroups of Dyer groups}
\author{María Cumplido, Marina Salamero, Giovanni Sartori and Mireille Soergel}
\date{\today}

\begin{document}

\maketitle

% ------- Theorem styles -------
\theoremstyle{plain}
\newtheorem{theorem}{Theorem}
\numberwithin{theorem}{section}

\newaliascnt{lemma}{theorem}
\newtheorem{lemma}[lemma]{Lemma}
\aliascntresetthe{lemma}
\providecommand*{\lemmaautorefname}{Lemma}

\newaliascnt{proposition}{theorem}
\newtheorem{proposition}[proposition]{Proposition}
\aliascntresetthe{proposition}
\providecommand*{\propositionautorefname}{Proposition}

\newaliascnt{corollary}{theorem}
\newtheorem{corollary}[corollary]{Corollary}
\aliascntresetthe{corollary}
\providecommand*{\corollaryautorefname}{Corollary}

\newaliascnt{conjecture}{theorem}
\newtheorem{conjecture}[conjecture]{Conjecture}
\aliascntresetthe{conjecture}
\providecommand*{\conjectureautorefname}{Conjecture}

%--- Introduction theorems ---

\newtheorem{thmintro}{Theorem}
\renewcommand*{\thethmintro}{\Alph{thmintro}}

\newtheorem{corintro}[thmintro]{Corollary}
\renewcommand*{\thecorintro}{\Alph{corintro}}

\theoremstyle{remark}

\newaliascnt{remark}{theorem}
\newtheorem{claim}[theorem]{Claim}
\newtheorem{remark}[remark]{Remark}
\newaliascnt{notation}{theorem}
\newtheorem{notation}[notation]{Notation}
\aliascntresetthe{notation}
\providecommand*{\notationautorefname}{Notation}

%\aliascntresetthe{claim}
\providecommand*{\claimautorefname}{Claim}

\aliascntresetthe{remark}
\providecommand*{\remarkautorefname}{Remark}

\newtheorem*{claim*}{Claim}

\theoremstyle{definition}
\newaliascnt{definition}{theorem}
\newtheorem{definition}[definition]{Definition}
\aliascntresetthe{definition}
\providecommand*{\definitionautorefname}{Definition}

\newaliascnt{example}{theorem}
\newtheorem{example}[example]{Example}
\aliascntresetthe{example}
\providecommand*{\exampleautorefname}{Example}

\newaliascnt{question}{theorem}
\newtheorem{question}[question]{Question}
\aliascntresetthe{question}
\providecommand*{\exampleautorefname}{Question}

\begin{abstract}
For all Dyer groups, we find an algorithm to determine when two parabolic subgroups are conjugate. Given two conjugate standard parabolic subgroup, we fully describe the conjugating elements in terms of ribbons, showing that the ribbon conjecture holds true. In particular we give a description of the normaliser of a parabolic subgroup using ribbons. We prove the standardisation property for parabolic subgroups and deduce that an arbitrary intersection of parabolic subgroups is a parabolic subgroup. 
\end{abstract}

\section{Introduction}

\subsection{Motivation}

The classes of Coxeter groups and right-angled Artin groups are generally well-understood. One common feature of Coxeter groups and right-angled Artin groups is their solution to the word problem. It was given by Tits for Coxeter groups \citep{Tits} and by Green for graph products of cyclic groups \citep{Green}. In his study of reflection subgroups of Coxeter groups \citep{Dyer1990ReflectionSubgroups}, Dyer introduces a family of groups which contains both Coxeter groups and graph products of cyclic groups. By \citep{Dyer1990ReflectionSubgroups} and \citep{PS23}, this family, which we call Dyer groups, has the same solution to the word problem as Coxeter groups and graph products of cyclic groups. It is therefore natural to study which properties of Coxeter groups and right-angled Artin groups can be extended to Dyer groups. A first answer can be found in \citep{Soergel2024Complex} where geometric actions of Dyer groups on CAT(0)
spaces are constructed that extend those of Coxeter groups on Davis–Moussong complexes \citep{moussong1988hyperbolic} and those of right-angled Artin groups on Salvetti complexes \citep{ChaDav}. In the present work we study several properties of parabolic subgroups of Dyer groups that are important conjectures in the field of Artin groups.

Similarly to Coxeter and Artin groups, Dyer groups are defined by specific presentations which can be encoded in a labelled graph.

\begin{definition}[Dyer group, Dyer system]\label{defdyer}
    Let $\Gamma$ be a finite simplicial graph with vertex set $\ver\Gamma$ and edge set $\edge\Gamma$. We suppose that $\Gamma$ comes with a labelling $f \colon \ver\Gamma \rightarrow \N_{\geq 2}\cup\{\infty\}$ of its vertices and a labelling $m\colon \edge\Gamma \rightarrow \N_{> 2}\cup\{\infty\}$ of its edges, such that for any $\{u,v\}\in \edge\Gamma$, if $f(v) \geq 3$, then $m(u,v) = \infty$. We extend the map $m\colon \edge\Gamma\rightarrow \N_{\geq 3}\cup\{\infty\}$ to $m\colon \binom{\ver\Gamma}{2}\rightarrow \N_{\geq 2}\cup\{\infty\}$ by $m(\{v,w\}) = 2$ if $\{v,w\}\notin \edge\Gamma$. The graph $\Gamma$ together with the labellings $f$ and $m$ is called a \emph{Dyer graph}.
    \begin{enumerate}
        \item Let $\Gamma$ be a Dyer graph; the associated \emph{Dyer group} is the group $D(\Gamma)$ given by
\begin{multline*}
    D(\Gamma) = \langle \ver{\Gamma} \mid v^{f(v)} = \neutre \text{ for all } v \in \ver{\Gamma} \text{ such that } f(v) < \infty, \\ \underbrace{uvu\cdots}_{m(u,v)\text{ terms}} = \underbrace{vuv\cdots}_{m(u,v)\text{ terms}} \text{ for all } \ \{u, v\} \text{ with } m(u,v)\neq\infty\rangle.
\end{multline*}
    \item A \emph{Dyer system} is a pair $(D,X)$, where $D$ is a group and $X\subseteq D$ is a generating set such that there exist a Dyer graph $\Gamma$ and a group isomorphism $D(\Gamma)\to D$ that maps $\ver\Gamma$ bijectively into~$X$. 
    \end{enumerate} 
\end{definition}

% Let $\Gamma$ be a finite simplicial graph with vertex set $\ver{\Gamma}$ and edge set $E(\Gamma)$. We suppose that $\Gamma$ comes with a labelling $f : \ver{\Gamma} \rightarrow \N_{\geq 2}\cup\{\infty\}$ of its vertices and a labelling $m: E(\Gamma) \rightarrow \N_{> 2}\cup\{\infty\}$ of its edges, such that for any $\{u,v\}\in E(\Gamma)$, if $f(v) \geq 3$ then $m(u,v) = \infty$. We extend the map $m: E\rightarrow \N_{\geq 3}\cup\{\infty\}$ to $m: \binom{V}{2}\rightarrow \N_{\geq 2}\cup\{\infty\}$ by $m(\{v,w\}) = 2$ if $\{v,w\}\notin E$. The graph $\Gamma$ is called a \emph{Dyer graph}, and the associated \emph{Dyer group} is the group $D(\Gamma)$ given by
% \begin{multline*}
%     D(\Gamma) = \langle \ver{\Gamma} \mid v^{f(v)} = \neutre \text{ for all } v \in \ver{\Gamma} \text{ such that } f(v) < \infty, \\ \underbrace{uvu\cdots}_{m(u,v)\text{ terms}} = \underbrace{vuv\cdots}_{m(u,v)\text{ terms}} \text{ for all } \ \{u, v\} \text{ with } m(u,v)\neq\infty\rangle.
% \end{multline*}

The reader unfamiliar with Coxeter groups or right-angled Artin groups may consider the following definition. A \emph{Coxeter group} is a Dyer group whose Dyer graph $\Gamma$ satisfies $f(v) = 2$ for every $v \in \ver{\Gamma}$. We will usually denote it by $W(\Gamma)$. A \emph{right-angled Artin group} (RAAG) is a Dyer group whose Dyer graph $\Gamma$ satisfies $f(v)=\infty$ for every $v\in \ver{\Gamma}$.\par

Amongst the subgroups of a Dyer group, parabolic subgroups play a special role. 

\begin{definition}[parabolic subgroups]
    \label{def:parabolic}
    Let $(D,X)$ be a Dyer system; for every $Y\subseteq X$, the \emph{standard parabolic subgroup} of $(D,X)$ associated with~$Y$ is the subgroup $D_Y$ of $D$ generated by~$Y$. \par 
    
    We call \emph{parabolic subgroup} of $(D,X)$ any $D$-conjugate of a standard parabolic subgroup of $(D,X)$. 
\end{definition}

For the sake of light notation, we will often talk about parabolic subgroups of a Dyer group rather than a Dyer system. We should however note that the definition of parabolic subgroup depends on the particular choice of the generating set~$X$. \par

\cite{Dyer1990ReflectionSubgroups} showed that, for every subset~$Y \subseteq \ver{\Gamma}$, the subgroup $D_Y$ is isomorphic to the Dyer group~$D(\Gamma_Y)$, where $\Gamma_Y$ is the full subgraph of $\Gamma$ spanned by~$Y$. %Consequently, we will write interchangeably $D_Y$ or $D(\Gamma')$ to denote the parabolic subgroup of $D(\Gamma)$ generated by $Y =\ver{\Gamma'}$.

\subsection{Statement of results} Following \citep{K94} and \citep{P97}, we give an algorithm which decides whether two standard parabolic subgroups of a Dyer system $(D,X)$ are conjugate %In fact, $D_T$ and $D_{T'}$ are conjugate if and only if the generating set $T$ and $T'$ are conjugate. 
(we refer to Theorem~\ref{equivalentconditions} for the complete statement):

\begin{thmintro}[Theorem~\ref{equivalentconditions}]
    Let $(D,X)$ be a Dyer system, let $Y,Y'\subseteq X$; the following conditions are equivalent:
    \begin{enumerate}
        \item there exists $\alpha\in D$ such that $\conjugate{D_Y}{\alpha} = D_{Y'}$;
        \item there exists $\beta \in D$ such that $\conjugate Y\beta = Y'$.
    \end{enumerate}
    Moreover, there exists an algorithm that checks whether either of the two equivalent conditions is satisfied. 
\end{thmintro}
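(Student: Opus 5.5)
The direction (2)$\Rightarrow$(1) is immediate: if $\beta Y\beta^{-1}=Y'$, then conjugation by $\beta$ maps the group generated by $Y$ onto the group generated by $Y'$, so $\alpha=\beta$ works. The work is in (1)$\Rightarrow$(2) and in the algorithm. For both, I would follow Krammer's and Paris's approach via ribbons. Given $\alpha$ with $\alpha D_Y\alpha^{-1}=D_{Y'}$, the plan is to decompose $\alpha$ as a product of elementary conjugations, each of which sends a subset of $X$ to a subset of $X$.

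\textbf{Step 1: reduce to a minimal coset representative.} I would use Dyer's solution to the word problem, which says that reduced words are related by braid moves, cyclic-order relations and commutations. From it I would show that each double coset $D_{Y'}\alpha D_Y$ contains a unique element $\alpha_0$ of minimal length, and that this element is $(Y',Y)$-reduced: no reduced expression of it begins with a letter of $Y'$ or ends with a letter of $Y$. Since $\alpha_0$ and $\alpha$ differ by elements of $D_{Y'}$ and $D_Y$, we still have $\alpha_0 D_Y\alpha_0^{-1}=D_{Y'}$.

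\textbf{Step 2: prove that a reduced conjugator maps $Y$ to $Y'$.} For each $y\in Y$, the element $\alpha_0 y\alpha_0^{-1}$ lies in $D_{Y'}$. Because $\alpha_0$ is reduced on both sides, the word $\alpha_0\,y\,\alpha_0^{-1}$ should admit no cancellation except through letters that commute with $y$ or are "absorbed" by it. I would make this precise with a normal form or length argument, in the spirit of the deletion and exchange conditions for Dyer groups proved in \citep{PS23}. The aim is to deduce $\alpha_0 y\alpha_0^{-1}\in X$. Once this holds for every $y$, conjugation by $\alpha_0$ gives an injection $Y\to X$ whose image generates $D_{Y'}$. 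Since a standard parabolic subgroup determines its generating set (the elements of $X$ lying in $D_{Y'}$ are exactly those of $Y'$), the image is exactly $Y'$, and $\beta=\alpha_0$ gives (2).

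This step is the main obstacle. Vertices with $f(v)\ge3$ or $f(v)=\infty$ do not behave like reflections: there is no root system or Tits cone available as in the Coxeter case. So the usual argument using reflections and positive roots must be replaced. My first attempt would use the CAT(0) complex of \citep{Soergel2024Complex}, or else a combinatorial argument on reduced words showing that a reduced $\alpha_0$ conjugating $y$ into a subgroup must be a product of "elementary ribbons". An elementary ribbon is the longest element $\Delta$ of a spherical subset $Z\cup\{z\}$ with $Z\supseteq$ the relevant part of $Y$, and conjugation by it maps $Z$ to a subset of $X$. Concretely, I would argue by induction on $\ell(\alpha_0)$. Take the last letter $z$ of $\alpha_0$, with $z\notin Y$. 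Show that the component of $Y\cup\{z\}$ containing $z$ is spherical, of finite type, and that $\alpha_0=\alpha_1\Delta_{Y\cup\{z\}}\Delta_Y^{-1}$ with $\ell(\alpha_1)<\ell(\alpha_0)$. Then apply induction to $\alpha_1$.

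\textbf{Step 3: the algorithm.} Once every conjugation between subsets of $X$ is known to factor through elementary ribbons, I would form a finite directed graph. Its vertices are the subsets of $X$, and there is an edge $Z\to\Delta Z\Delta^{-1}$ for each of the finitely many elementary ribbons, each computable from $\Gamma$. Deciding (2) then reduces to reachability in this finite graph. Deciding (1) is the same question by the equivalence.
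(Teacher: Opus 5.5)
\textbf{There is a genuine gap: the central step of (1)$\Rightarrow$(2) is left open, and the algorithm depends on an unproved factorisation.}

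Your Step~2 stops exactly where you say the main obstacle lies, yet that step needs no geometry. Since $\alpha_0D_Y\alpha_0^{-1}=D_{Y'}$, the double coset $D_{Y'}\alpha_0D_Y$ equals both $\alpha_0D_Y$ and $D_{Y'}\alpha_0$. So $\alpha_0$ is the minimal element of both one-sided cosets, and Proposition~\ref{Proposition 2.8} gives length additivity on both sides. For every syllable $s\in S(Y)$,
$l(\alpha_0)+1=l(\alpha_0s)=l\bigl((\alpha_0s\alpha_0^{-1})\alpha_0\bigr)=l(\alpha_0s\alpha_0^{-1})+l(\alpha_0)$,
so $\alpha_0s\alpha_0^{-1}$ is a single syllable, and by convexity it lies in $S(Y')$. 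This two-line computation is the paper's argument.

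You also skip a point specific to Dyer groups: a syllable is a power $y'^{k}$, not necessarily an element of $X$. For $o(y)=2$ this is harmless. For $o(y)>2$ you must rule out, for example, $\alpha_0y\alpha_0^{-1}=y^{-1}$, $y^2$, or a power of another generator. The paper does this with the exponent sum of $y$, which is well defined because $y$ occurs only in commutation relations.

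For the algorithm, Step~3 assumes that every conjugation between subsets of $X$ factors through elementary ribbons. Your induction for this rests on an unproved assertion: that for the last letter $z$ of $\alpha_0$, the component $U$ of $\Gamma_{Y\cup\{z\}}$ containing $z$ is spherical, and $\alpha_0$ ends with the associated ribbon. That ribbon is $r_{Y,z}=w_Uw_{U\setminus\{z\}}$, or $z$ itself when $o(z)>2$. It is not $\Delta_{Y\cup\{z\}}\Delta_Y^{-1}$, since $Y$ need not be spherical.

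That assertion is the entire difficulty. In the Coxeter case Krammer obtains it from the action on simple roots, which, as you note yourself, has no analogue here, and you offer no substitute. Without it, your graph only gives ``reachable $\Rightarrow$ conjugate'', not the converse a decision procedure needs.

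The idea you are missing is the paper's reduction to the Coxeter case, in four steps:
\begin{enumerate}
\item A generator $x$ of order $>2$ is conjugate only to itself and has centraliser $\langle x\rangle\times D_{\{x\}^\perp}$.
\item A connected component of $\Gamma_Y$ containing such a generator cannot be conjugated to a different subset of $X$.
\item The retraction $g\mapsto\hat g$ onto $D_{X_2}$, which deletes letters of order $>2$ and is well defined by Lemma~\ref{removegenerator}, turns a conjugation of $Y_2$ onto $Y'_2$ into one inside the Coxeter group $D_{X_2}$.
\item Krammer's theorem applies in $D_{X_2}$, and the resulting edges of $G_2$ are lifted to edges of $G$.
\end{enumerate}
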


The proof of the theorem is similar in spirit to Paris's characterisation of conjugacy between parabolic subgroups of Artin and Coxeter groups~\cite[Theorem 4.1]{P97}: starting from a Dyer system, we algorithmically construct a graph that encodes the conjugacy relation between standard parabolic subgroups.\par

Once assessed when two standard parabolic subgroups $D_Y$ and $D_{Y'}$ are conjugate, one may classify the elements that conjugate $D_Y$ into $D_{Y'}$. Let us denote by $\operatorname{Conj}(D_Y,D_{Y'})$ the set of such elements. Amongst those elements, \emph{ribbons} play a special role, as we are to explain. We will postpone the definition of ribbons to Section~\ref{sec:ribbonProperty}: for the purpose of this introduction it is enough to think about elementary ribbons as minimal elements that conjugate subsets of standard generators, and ribbons are compositions of elementary ribbons. We denote by $\operatorname{Ribb}(Y,Y')$ the subset of $\operatorname{Conj}(D_Y,D_{Y'})$ consisting of those conjugating elements that can be expressed as ribbons. Ribbons have been studied in the context of Coxeter, Artin and more generally Garside groups \citep{Godelle2010}. Thanks to results of \cite{Godelle2003} and \cite{P97}, we know that any element that conjugates a standard parabolic subgroup of an Artin group of spherical type into another can be written as the product of ribbons. This property is known as the \emph{ribbon property}. It is conjectured that all Artin groups satisfy this property, and this is known as the \emph{ribbon conjecture}. Our next result shows this property for Dyer groups.

\begin{thmintro}[Theorem~\ref{thm:ribbonConj}]
    Let $(D,X)$ be a Dyer system and let $Y,Y'\subseteq X$. Then,
    \[
    \Conj(D_Y,D_{Y'})=\Ribb(Y,Y') \cdot D_Y.
    \]
\end{thmintro}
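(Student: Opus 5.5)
The inclusion $\Ribb(Y,Y')\cdot D_Y\subseteq\Conj(D_Y,D_{Y'})$ is immediate. Every elementary ribbon conjugates $D_Z$ onto $D_{Z'}$ for the relevant subsets. A composition of elementary ribbons $Y=Y_0\to Y_1\to\cdots\to Y_k=Y'$ therefore conjugates $D_Y$ onto $D_{Y'}$. Right multiplication by an element of $D_Y$ does not change $\conjugate{D_Y}{\alpha}$. So all the work lies in the reverse inclusion.

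For the reverse inclusion, let $\alpha\in\Conj(D_Y,D_{Y'})$, so $\conjugate{D_Y}{\alpha}=D_{Y'}$. I would first reduce to a minimal coset representative. Using the solution to the word problem from \citep{Dyer1990ReflectionSubgroups,PS23}, choose $\alpha_0\in\alpha D_Y$ of minimal length, so that $\alpha_0$ admits no reduced expression ending in a letter of $Y$. Since $\alpha_0$ differs from $\alpha$ by an element of $D_Y$, it suffices to show $\alpha_0\in\Ribb(Y,Y')$. The first key step is a Dyer analogue of the classical lemma for Coxeter groups: if $\alpha_0$ is $(Y',Y)$-reduced and $\conjugate{D_Y}{\alpha_0}=D_{Y'}$, then $\alpha_0$ actually conjugates the generating set, i.e. $\conjugate{Y}{\alpha_0}=Y'$.

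I would deduce this from the proof of Theorem~\ref{equivalentconditions}, or from the normal form theory. For $y\in Y$, the element $\alpha_0 y\alpha_0^{-1}$ lies in $D_{Y'}$. Reducedness of $\alpha_0$ on both sides forces no cancellation in $\alpha_0\, y\, \alpha_0^{-1}$ except through shuffling and braid moves. A length comparison then shows that $\alpha_0 y\alpha_0^{-1}$ has the same length as $y$, hence equals a single generator in $Y'$. In Dyer groups one must also handle vertices of order $f(y)\ge 3$ or $\infty$, where powers $y^k$ appear. There I would use that such a vertex only commutes with its neighbours and has no braid relations, so the relevant part of $\alpha_0$ must centralise $y$ up to Dyer's normal form.

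The second key step is an induction on $\ell(\alpha_0)$ to show that an element with $\conjugate{Y}{\alpha_0}=Y'$ and minimal length in its coset is a product of elementary ribbons. If $\alpha_0\neq\neutre$, pick a letter $x$ at the right end of a reduced word for $\alpha_0$; then $x\notin Y$. Consider $Z$, the connected component of $Y\cup\{x\}$ containing $x$, in the appropriate sense: the smallest subset such that the needed parabolic $D_{Y\cup\{x\}}$ splits off. Take the corresponding elementary ribbon $\rho$, which is the minimal element of $D_{Z}$ sending $Y$ to some $Y_1$ (a longest-element-type product when $D_Z$ is a finite Coxeter group). Then show that $\alpha_0=\alpha_1\rho$ with $\ell(\alpha_1)<\ell(\alpha_0)$ and $\conjugate{Y_1}{\alpha_1}=Y'$, and induct.

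The main obstacle is the existence of the elementary ribbon ending in $x$. One must show that when $D_{Y\cup\{x\}}$ is not spherical, or when $x$ or some vertex of $Y$ adjacent to $x$ has order $\neq 2$, no reduced $\alpha_0$ conjugating $Y$ into $X$ can end in $x$ at all. Equivalently, the tail of $\alpha_0$ in the component of $x$ must be the longest-element ribbon. I would attempt this via the Deletion/Exchange-type properties available from Dyer's normal forms. The idea is to use the fact that the only finite parabolics with nontrivial conjugation among subsets are Coxeter-type, and that vertices with $f\ge 3$ behave like RAAG vertices, so in that case the ribbon is just a commutation.
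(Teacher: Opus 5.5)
Your outer structure is sound and essentially matches the paper. Replacing $\alpha$ by the minimal element $\alpha_0$ of $\alpha D_Y=D_{Y'}\alpha$ and deducing $\alpha_0Y\alpha_0^{-1}=Y'$ is exactly the argument in the proof of Theorem~\ref{equivalentconditions} (conjugate subgroups imply conjugate generating sets). Peeling elementary ribbons off the right by induction on length is the direct form of the paper's reductions in Claims~\ref{claim:noSuffixInDY}--\ref{claim:minLenInDoubleCoset}.

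The gap is the step you yourself call the main obstacle. You need: if $x$ is a final letter of $\alpha_0$, then the component $U$ of $\Gamma_{Y\cup\{x\}}$ containing $x$ is spherical, and $r_{Y,x}=w_U^{-1}w_{U\setminus\{x\}}$ (or $x$, if $o(x)>2$) is a suffix of $\alpha_0$ with additive length. This is the whole content of the theorem, and you give no argument for it. Exchange or deletion-type properties only tell you that $x$ is a right descent of $\alpha_0$ while no letter of $Y$ is. They do not by themselves force the entire element $w_U^{-1}w_{U\setminus\{x\}}$ to be a suffix, nor rule out a non-spherical $U$. There is also no ready-made root system in a Dyer group on which to run the Deodhar/Brink--Howlett/Krammer argument used for Coxeter groups.

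Two statements in your sketch are also wrong as written. First, ``if $D_{Y\cup\{x\}}$ is not spherical, no such $\alpha_0$ ends in $x$'' fails: if $D_Y$ is infinite and $x\in Y^\perp$, then $\alpha_0=x$ ends in $x$, so only $U$ matters. Second, $\rho$ is not ``the minimal element of $D_Z$ sending $Y$ to some $Y_1$'', since the identity does that.

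The paper fills this step with a common-multiple argument in the Artin monoid.
\begin{enumerate}
\item When every element of $Y$ has order $2$ and $D_Y$ is finite, $\alpha_0w_Y$ has both $w_Y$ and a final letter $x$ of $\alpha_0$ as suffixes. One way to see this is $\alpha_0w_Y=w_{Y'}\alpha_0$, with lengths adding on both sides. Reduced words are related by type~II $M$-transformations only, so this lifts to the positive Artin monoid. There $w_Y$ and $x$ have a common right multiple only if $Y\cup\{x\}$ is spherical (Brieskorn--Saito), and their least common multiple is $w_{Y\cup\{x\}}=r_{Y,x}w_Y$. Cancelling $w_Y$ shows that $r_{Y,x}$ is a suffix of $\alpha_0$.
\item For $D_Y$ infinite, the paper needs a further argument with the elements $w_s=w'w_Zs$.
\item Letters of infinite order in $\Supp(\alpha_0)$ are treated separately.
\item When $Y$ contains a generator $y$ of order $>2$, the paper does not peel letters at all. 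It uses $\operatorname{C}_D(y)=\langle y\rangle\times D_{\{y\}^\perp}$ (Claim~\ref{claim:centraliser}), the all-involution case, and an induction on $\lvert Y\rvert$ along $\infty$-labelled edges of $\Gamma_Y$. This shows the conjugator lies in $D_{Y^\perp}\times D_Y$, and the paper then splits off connected components.
\end{enumerate}
Until you supply these ingredients, or a genuine substitute, your proposal is an outline of the standard strategy rather than a proof.
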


In particular, when $Y'=Y$, we obtain an explicit description of the normaliser of the standard parabolic subgroup~$D_{Y}$. Notice that, in this case, $\operatorname{Ribb}(Y,Y)$ is a subgroup of~$D$.

\begin{corintro}[Corollary~\ref{cor:normaliserStdParabolic}]
    \label{corintro:normaliser}
    Let $(D,X)$ be a Dyer system and let $Y\subseteq X$. The normaliser of $D_Y$ in $D$ decomposes as
    \[
    \operatorname{N}_{D}(D_Y)=D_Y\rtimes \Ribb(Y,Y),
    \]
where the action of $\Ribb(Y,Y)$ on $D_Y$ is given by conjugation.
\end{corintro}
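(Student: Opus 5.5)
The decomposition $\operatorname{N}_{D}(D_Y)=D_Y\rtimes \Ribb(Y,Y)$ should follow directly from Theorem~\ref{thm:ribbonConj} with $Y'=Y$, together with one extra fact: $\Ribb(Y,Y)\cap D_Y=\{\neutre\}$. By definition $\operatorname{N}_D(D_Y)=\Conj(D_Y,D_Y)$. Theorem~\ref{thm:ribbonConj} then gives $\operatorname{N}_D(D_Y)=\Ribb(Y,Y)\cdot D_Y$. Since $D_Y$ is normal in its normaliser, this product equals $D_Y\cdot\Ribb(Y,Y)$. We use that $\Ribb(Y,Y)$ is a subgroup: it is closed under composition, and inverses of elementary ribbons are elementary ribbons. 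So we only need the two subgroups to intersect trivially, which yields the semidirect product with $\Ribb(Y,Y)$ acting on the normal factor $D_Y$ by conjugation.

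For the trivial intersection, I would use the defining minimality property of ribbons. Each elementary ribbon is the minimal-length element of its coset $\alpha D_Y$, meaning it is $(\emptyset,Y)$-reduced, and it conjugates $Y$ bijectively onto a set of standard generators. I would prove that every ribbon $\rho\in\Ribb(Y,Y)$ satisfies $\rho Y\rho^{-1}=Y$ as sets and is the unique shortest element of $\rho D_Y$. Granting this, suppose $\rho\in\Ribb(Y,Y)\cap D_Y$. Then $\rho D_Y=D_Y$, whose unique shortest element is $\neutre$, so $\rho=\neutre$. The uniqueness of the minimal coset representative should come from the normal form/solution to the word problem for Dyer groups (Dyer, Paris–Soergel), which provides deletion/exchange-type arguments like those for Coxeter groups. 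It is also possible that the paper's proof of Theorem~\ref{thm:ribbonConj} already yields uniqueness of the factorisation $\alpha=\rho\, d$, in which case I would simply invoke it.

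The main obstacle is showing that a product of elementary ribbons, as an element, is still $(\emptyset,Y)$-reduced, i.e.\ minimal in its coset, since concatenating minimal pieces can in principle shorten. I would handle this by induction on the number of elementary factors. Write $\rho=\rho_1\rho'$, where $\rho'$ maps $Y$ to $Z$ and $\rho_1$ is an elementary ribbon from $Z$ to $Y$. Suppose $\rho s$ were shorter than $\rho$ for some $s\in Y$. Since $\rho s=(\rho s\rho^{-1})\rho$ with $\rho s\rho^{-1}\in Y$, a length-reduction argument would push $s$ through and contradict the reducedness of either $\rho_1$ or $\rho'$. This is the Dyer-group analogue of the Coxeter fact that elements mapping simple roots of $Y$ to positive roots are minimal coset representatives; the case of vertices with $f(v)\ge3$ (where $m=\infty$ with all neighbours) needs to be checked separately, but it should be simpler.
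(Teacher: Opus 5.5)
Your reduction is the same as the paper's. The paper specialises Theorem~\ref{thm:ribbonConj} to $Y'=Y$ and calls the corollary immediate. You are right that the semidirect product also needs $\Ribb(Y,Y)\cap D_Y=\{\neutre\}$. The paper never proves this, and the proof of Theorem~\ref{thm:ribbonConj} only gives existence of the factorisation, not uniqueness. Your argument for the trivial intersection has a genuine gap. The inductive ``push $s$ through'' step does not show that a product of elementary ribbons is minimal in its coset $\rho D_Y$, because ribbons are not length-additive. For example, with $m(s,t)=3$ one has $r_{\{t\},s}\,r_{\{s\},t}=ts\cdot st=\neutre$. So from $l(\rho_1 s')>l(\rho_1)$ and $l(s'\rho')>l(\rho')$, where $s'=\rho's\rho'^{-1}$, nothing follows about $l(\rho_1 s'\rho')$ versus $l(\rho_1\rho')$. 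Even for Coxeter groups this step is not a length count. It rests on the fact that a ribbon sends the simple roots $\Pi_Y$ onto $\Pi_{Y'}$ (equivalently, on the cocycle property of inversion sets). You have no such tool in a Dyer group, where generators of order $>2$ do not fit into a root system.

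You can avoid the minimality claim by reducing to the Coxeter case.
\begin{enumerate}
\item Let $A$ be the union of the components of $\Gamma_Y$ that contain a generator of order $>2$, and let $B=Y\setminus A\subseteq X_2$.
\item Take $x$ of order $2$ such that the component $U$ of $\Gamma_{Y^i\cup\{x\}}$ containing $x$ is spherical. Then $U$ cannot meet $A$, since it would contain a whole component of $\Gamma_A$ and hence a vertex of order $>2$ with an $\infty$-edge. So $U\subseteq A^\perp\cap X_2$.
\item Elementary ribbons $x^{\pm1}$ with $o(x)>2$ lie in $(Y^i)^\perp\subseteq A^\perp$. Inverses of Coxeter-type elementary ribbons are again of that type.
\item By induction along $Y=Y^1,\dots,Y^{q+1}$, every $Y^i$ equals $A\sqcup B^i$ with $B^i\subseteq A^\perp\cap X_2$, and every factor lies in $D_{A^\perp}$. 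By convexity, $\Ribb(Y,Y)\cap D_Y\subseteq D_{A^\perp}\cap D_Y=D_B\subseteq D_{X_2}$.
\item The homomorphism $g\mapsto\hat g$ of Remark~\ref{remark_hat} kills the elementary ribbons of order $>2$. It fixes the others, which are literally the elementary ribbons $r_{Y^i_2,x}$ of the Coxeter system $(D_{X_2},X_2)$, with the same $U$. Hence $\hat\rho$ is a Coxeter $(Y_2,Y_2)$-ribbon.
\item Coxeter ribbons map $\Pi_{Y_2}$ onto $\Pi_{Y_2}$, so they have no right descents in $Y_2$. The only such element of $D_{Y_2}$ is $\neutre$.
\item Therefore, for $\rho\in\Ribb(Y,Y)\cap D_Y$ we have $\rho\in D_B$, so $\rho=\hat\rho=\neutre$.
\end{enumerate}
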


Finally, we tackle the so-called \emph{standardisation property}. Let $(D,X)$ be a Dyer system, let $Y\subseteq X$ and let $P\subseteq D$ be a parabolic subgroup. Assume further that $P\subseteq D_Y$. As $(D_Y,Y)$ is a Dyer system itself, it is natural to ask whether $P$ is also a parabolic subgroup of~$D_Y$ with respect to~$Y$. For all Dyer systems, we give a positive answer to this question:

\begin{thmintro}[Theorem~\ref{thm:parabolicInsideParabolicDyer}]
    \label{thmintro:parabInsideParab}
    Let $(D,X)$ be a Dyer system. Let $g\in D$ and $Y,Z\subseteq X$ be such that $gD_{Y}g^{-1}\subseteq D_{Z}$. Then there exist $h\in D_Z$ and $Y'\subseteq Z$ such that $gD_{Y}g^{-1}=hD_{Y'}h^{-1}$.
\end{thmintro}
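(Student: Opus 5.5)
We reduce $g$ to a canonical form and then use the normal form theory of Dyer groups (Tits/Green-type solution to the word problem, see \citep{Dyer1990ReflectionSubgroups,PS23}). First write $g = g_1 g_2$ where $g_2\in D_Y$ and $g_1$ is of minimal length in the coset $gD_Y$; since $gD_Yg^{-1} = g_1D_Yg_1^{-1}$ we may replace $g$ by $g_1$ and assume $g$ is $(\emptyset,Y)$-reduced, i.e.\ no reduced expression of $g$ ends with a letter of $Y$ (for Dyer groups with generators of order $>2$ one has to interpret this as: no reduced word for $g$ has a syllable from $Y$ at its right end). Similarly, decompose $g = h g'$ with $h\in D_Z$ and $g'$ of minimal length in $D_Z g$; then $g'D_Yg'^{-1} = h^{-1}(gD_Yg^{-1})h\subseteq D_Z$. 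So it suffices to treat $g$ that is reduced on the left with respect to $Z$ and on the right with respect to $Y$, and to show that then $gD_Yg^{-1}=D_{Y'}$ for some $Y'\subseteq Z$ (taking $h$ to be the element pulled off on the left).

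The key step is to show, for such a doubly reduced $g$ and each $y\in Y$, that $gyg^{-1}$ is a standard generator (up to the power/order subtleties) lying in $Z$. Take a reduced word $w$ for $g$; the word $w\,y\,w^{-1}$ represents an element of $D_Z$. By the normal form theorem for Dyer groups, a word is reduced unless it can be shortened via braid moves, syllable merging, or free cancellation, and the support (set of letters appearing in any reduced expression) of an element is well defined. Since $gyg^{-1}\in D_Z$, its support lies in $Z$. I would argue that the only possible reduction of $w y w^{-1}$ comes from $y$ commuting or braiding past $w$: because $w$ has no right-$Y$-tail and $w^{-1}$ no left-$Z$-head, one shows by induction on $\ell(g)$ that $y$ must be moved entirely through $w$, giving $gyg^{-1} = z$ with $z\in X$, and that $z\in Z$ by support. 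In the Coxeter-type part (vertices of order $2$ with $m\ge 3$ edges) this is Deodhar/Kilmoyer-type reasoning: $wyw^{-1}$ has length $\le 2\ell(w)+1$, and if it is not a conjugate of length $1$ then a reduced expression contains a letter of $w$ outside $Z$, contradicting left $Z$-reducedness. For vertices with $f(v)\geq 3$ (only commuting or free edges), conjugation can only happen through commutation, which is easier.

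Then $Y' := gYg^{-1}\subseteq Z$ and $gD_Yg^{-1}=D_{Y'}$, and undoing the reductions gives $h\in D_Z$ as required. The main obstacle is the middle step: controlling reduced forms of $wyw^{-1}$ uniformly across the mixed Coxeter/graph-product structure. I would first try to derive it from a ``deletion/exchange''-type lemma for Dyer groups established earlier via \citep{PS23}, or alternatively from Theorem~\ref{equivalentconditions} and the ribbon description (Theorem~\ref{thm:ribbonConj}): decomposing $g$ into elementary ribbons, each elementary ribbon conjugating a subset of generators to another, and checking that minimality forces all intermediate subsets to stay inside $Z$.
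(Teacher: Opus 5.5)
Your overall strategy is the paper's: reduce $g$ to a representative of minimal length and show that it conjugates $D_Y$ onto a standard parabolic subgroup of $D_Z$. The reduction itself is fine; the paper takes $t$ minimal in $D_ZtD_Y$ and writes $g=atb$. You only need to add one line showing that right $Y$-minimality survives the left reduction: $l(g)=l(h)+l(g')$ by Proposition~\ref{Proposition 2.8}, so a shorter element $g_0'\in g'D_Y$ would give $hg_0'\in gD_Y$ shorter than $g$.

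The genuine gap is the key step $gYg^{-1}\subseteq Z$. After the reduction this is the whole content of the theorem, and you do not prove it. The sentence ``one shows by induction on $\ell(g)$ that $y$ must be moved entirely through $w$'' only restates the conclusion. No inductive step is given, and removing a letter from $g$ does not preserve the hypothesis $gD_Yg^{-1}\subseteq D_Z$, so it is unclear what the induction runs on. The length-count sentence is also confused. A reduced expression of $gyg^{-1}$ containing a letter outside $Z$ would contradict convexity of $D_Z$, not left $Z$-reducedness. Moreover, the implication ``length $>1$ implies a letter outside $Z$'' is exactly what must be shown. (Incidentally, left $Z$-reducedness constrains the first letters of $w$, i.e.\ the last ones of $w^{-1}$.) Your fallback through Theorem~\ref{equivalentconditions} or Theorem~\ref{thm:ribbonConj} is circular. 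Both results presuppose that $gD_Yg^{-1}$ is already a standard parabolic subgroup, which is what you are trying to prove; and Case~2 of the ribbon proof itself relies on \cite[Lemma 6.1]{PS23}.

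The paper closes the step by quoting \cite[Lemma 6.1]{PS23}, the Dyer-group analogue of the Kilmoyer--Solomon intersection lemma. For $t$ minimal in $D_ZtD_Y$ it gives $tD_Yt^{-1}\cap D_Z=D_{Y'}$ with $Y'=tYt^{-1}\cap Z$, and $tD_Yt^{-1}\subseteq D_Z$ then yields $tD_Yt^{-1}=D_{Y'}$.

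With your two one-sided minimality conditions you can also do it by hand, essentially as in the proof of Theorem~\ref{equivalentconditions}:
\begin{enumerate}
\item Minimality of $g$ in $gD_Y$ gives $l(gy)=l(g)+1$.
\item Set $k:=gyg^{-1}\in D_Z$. Minimality of $g$ in $D_Zg$ gives $l(gy)=l(kg)=l(k)+l(g)$.
\item Hence $l(k)=1$, and by convexity $k=z^{\alpha}$ with $z\in Z$.
\item If $o(z)=2$, then $\alpha=1$. If $o(z)>2$, the exponent sum of $z$ is a homomorphism $D\to\mathbb{Z}_{f(z)}$, since $z$ only occurs in $z^{f(z)}$ and in commutation relations. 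It takes the value $\alpha\neq 0$ on $z^{\alpha}$ and vanishes on $y$ unless $y=z$, which forces $y=z$ and $\alpha=1$.
\end{enumerate}
So $gYg^{-1}\subseteq Z$ and $gD_Yg^{-1}=D_{gYg^{-1}}$, which completes your argument.
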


For Coxeter groups, the equivalent statement of Theorem~\ref{thmintro:parabInsideParab} follows as a direct consequence of results by~\cite{Solomon}, whilst for right-angled Artin groups this is due to \cite{Duncan2007}. Remarkably, the same result for Artin groups was only proved in full generality recently in \cite[Theorem 1.1]{BlufPar}, although previous works proved the theorem when restricting to some subclasses of Artin groups.\par

Theorem~\ref{thmintro:parabInsideParab} turns out to be the key step to prove another important combinatorial property of Dyer systems: the intersection of parabolic subgroups of a Dyer group is a parabolic subgroup itself. This result was already proved in \cite[Theorem 2.10]{PS23} for finite intersections, but we generalize it to arbitrary intersections. For Coxeter groups, it was proved in \citep{Qi2007}, while for RAAGs it was proved in \citep{Duncan2007}.

\begin{thmintro}[Theorem~\ref{thm:intersectionParabSubgrp}]
    \label{thmintro:intersectionParabSubgrp}
    Let $(D,X)$ be a Dyer system and let $\{P_i\}_{i\in I}$ be a family of parabolic subgroups of $D$. Then $\bigcap_{i\in I}P_i$ is a parabolic subgroup of $D$.
\end{thmintro}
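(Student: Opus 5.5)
We reduce the arbitrary intersection to a finite one and then use a descending chain argument. By \cite[Theorem 2.10]{PS23}, the intersection of finitely many parabolic subgroups of $D$ is again parabolic. Hence for every finite $F\subseteq I$ the subgroup $P_F=\bigcap_{i\in F}P_i$ is parabolic. It then suffices to show that every strictly descending chain of parabolic subgroups $Q_1\supsetneq Q_2\supsetneq\cdots$ in $D$ is finite. Granting this, we choose a finite $F$ such that $P_F$ is minimal among the $P_{F'}$ with $F'$ finite; such an $F$ exists because otherwise we could build an infinite strictly descending chain. For any $j\in I$ we have $P_{F\cup\{j\}}\subseteq P_F$, so minimality forces equality. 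Therefore $P_F\subseteq P_j$ for all $j$, which gives $\bigcap_{i\in I}P_i=P_F$, a parabolic subgroup.

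The main step is the descending chain condition, and we plan to obtain it from Theorem~\ref{thm:parabolicInsideParabolicDyer}. Consider parabolic subgroups $Q'\subseteq Q$, and conjugate so that $Q=D_Z$ is standard. Write $Q'=gD_Yg^{-1}\subseteq D_Z$. Theorem~\ref{thm:parabolicInsideParabolicDyer} then gives $h\in D_Z$ and $Y'\subseteq Z$ with $Q'=hD_{Y'}h^{-1}$. Conjugating by $h^{-1}\in D_Z$, which preserves $D_Z$, we get $D_{Y'}\subseteq D_Z$ with $Y'\subseteq Z$.

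We now assign to each parabolic subgroup the invariant $\mathrm{rk}(Q)=\min\{|Y| : Q \text{ is conjugate to } D_Y\}$. We claim that $Q'\subsetneq Q$ implies $\mathrm{rk}(Q')<\mathrm{rk}(Q)$. To prove this, take $Z$ realising $\mathrm{rk}(Q)$ and apply the construction above, obtaining $Y'\subseteq Z$. If $Y'\subsetneq Z$, then $\mathrm{rk}(Q')\le|Y'|<|Z|$. If instead $Y'=Z$, then $D_{Y'}=D_Z$, so $Q'=Q$, which is a contradiction. Since $\mathrm{rk}$ takes values in $\{0,\dots,|X|\}$, every strictly descending chain has length at most $|X|+1$.

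The only delicate point is justifying each application of Theorem~\ref{thm:parabolicInsideParabolicDyer} and the conjugation bookkeeping, and both are routine given that theorem. We also need the finite case from \cite{PS23}. If we prefer not to rely on it, the pairwise case $P_1\cap P_2$ would need to be proven separately, but we take it as given here.
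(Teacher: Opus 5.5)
Your proof is correct and is essentially the paper's argument. Both combine closure of parabolic subgroups under finite intersections (the paper cites \cite[Lemma 6.2]{PS23} for this in full generality, attributing \cite[Theorem 2.10]{PS23} to the finite-type case) with Theorem~\ref{thm:parabolicInsideParabolicDyer}, which shows that a parabolic subgroup properly contained in $kD_Zk^{-1}$ is conjugate to some $D_{Y'}$ with $Y'\subsetneq Z$; the only difference is packaging, since the paper directly takes a finite intersection $P_0=g_0D_{Y_0}g_0^{-1}$ with $|Y_0|$ minimal and shows $P\cap P_0=P_0$ for every finite intersection $P$, whereas you first isolate the rank function and a descending chain condition and then take an inclusion-minimal finite intersection.
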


As a direct consequence, we obtain that the parabolic closure is well defined.

\begin{corintro}[Corollary~\ref{cor:parabClosure}]
    Let $(D,X)$ be a Dyer system; every element of $D$ admits a \emph{parabolic closure}, i.e. for every $g\in D$ there exists an inclusion-wise smallest parabolic subgroup of $D$ containing $g$.
\end{corintro}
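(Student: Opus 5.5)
Every element $g\in D$ lies in at least one parabolic subgroup, namely $D=D_X$ itself. So I will simply intersect all of them and invoke Theorem~\ref{thmintro:intersectionParabSubgrp}.

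Fix $g\in D$ and let $\{P_i\}_{i\in I}$ be the family of all parabolic subgroups of $(D,X)$ containing $g$. This family is nonempty, since $D=D_X$ is a standard parabolic subgroup. It is also a set: parabolic subgroups are indexed by pairs $(\alpha,Y)$ with $\alpha\in D$ and $Y\subseteq X$, so it involves no set-theoretic subtlety. Define
\[
\operatorname{PC}(g)=\bigcap_{i\in I}P_i .
\]

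By Theorem~\ref{thmintro:intersectionParabSubgrp}, $\operatorname{PC}(g)$ is a parabolic subgroup of $D$. It contains $g$ because every $P_i$ does. If $P$ is any parabolic subgroup containing $g$, then $P=P_j$ for some $j\in I$, hence $\operatorname{PC}(g)\subseteq P$. So $\operatorname{PC}(g)$ is the inclusion-wise smallest parabolic subgroup containing $g$. It is unique, since two smallest elements each contain the other.

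The entire difficulty is carried by Theorem~\ref{thmintro:intersectionParabSubgrp}, and specifically by its validity for \emph{arbitrary}, not merely finite, intersections. The family $I$ is typically infinite: for instance, the conjugates of a fixed $D_Y$ that contain $g$ may be infinitely many. The finite case from \cite[Theorem 2.10]{PS23} alone would not suffice unless one first proves a descending chain condition on parabolic subgroups. Since the theorem is stated in full generality, no such extra step is needed here.
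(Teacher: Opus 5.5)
Your proof is correct and is essentially the paper's argument: the paper also applies Theorem~\ref{thm:intersectionParabSubgrp} to the family of all parabolic subgroups containing the given element (stated there for an arbitrary subset $A\subseteq D$). Your remarks on nonemptiness via $D=D_X$ and on needing arbitrary rather than finite intersections are accurate; the nonemptiness check is not strictly required, since the theorem also covers the empty family.
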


\subsection{Organisation of the paper}
This paper is organised as follows. In Section~\ref{background}, we state the relevant definitions and results that we will be using in the next sections.
In Section~\ref{conjugacyparabolic}, we find an algorithm to determine when two parabolic subgroups of a Dyer group are conjugate. In Section~\ref{standardisation}, we prove the standardisation property for parabolic subgroups of Dyer groups, and deduce that an arbitrary intersection of parabolic subgroups of a Dyer group is a parabolic subgroup. Finally, in Section~\ref{sec:ribbonProperty} we prove the ribbon property for Dyer groups.

\subsection{Acknowledgements} 

This article is part of the research project PID2022-138719NA-I00, financed by the Spanish Ministry of Science and Innovation MCIN/AEI/10.13039/501100011033/FEDER, UE. The third author was partially supported by the EPSRC Standard Research Grant UKRI1\-018. 

We thank Luis Paris for pointing to \citep{Solomon} as a reference to prove Theorem~\ref{thm:parabolicInsideParabolicDyer}.

\section{Background}\label{background}

In this section we will state the relevant definitions and results that we will be using in the next sections. Note that Dyer graphs, groups and systems were already defined in the Introduction (cf. Definition \ref{defdyer}).

\begin{definition}
    Let $(\Gamma,f,m)$ be a Dyer graph and let $S(X) = \{x_v^{\alpha}\mid v\in \ver\Gamma, \alpha\in \mathbb{Z}_{f(v)}\setminus \{0\}\}$, where $\mathbb{Z}_{f(v)} = \mathbb{Z}/f(v)\mathbb{Z}$ if $f(v) <\infty$ and $\mathbb{Z}_{\infty} =\mathbb{Z}$. A \emph{syllabic word} $w$ is an element in the free monoid $S(X)^* $. It is usually written as a finite sequence. For a syllabic word $w = (s_1,s_2,\dots, s_l)\in S(X)^*$, we set $\overline{w} = s_1s_2\cdots s_l\in D(\Gamma)$ and say that $\overline{w}$ is \emph{represented} by $w$. The shortest length of a syllabic word representing an element $g\in D$ is called the \emph{syllabic length}, or simply length, of $g$ and we will denote it by $l(g)$. A syllabic word $w$ is \emph{reduced} if $l = l(\overline{w})$.
\end{definition}

\begin{definition}[$M$-transformations]
Let $w\in S(X)^*$ be a syllabic word, and assume that $w$ can be written as $w=w_1\cdot(s,t)\cdot w_2$, where $w_1,w_2\in S(X)^*$, $s,t\in S(X)$ and $st\in S(X)\cup\{1\}$. Set $w'$ to be equal to $w_1\cdot(st)\cdot w_2$ if $st\neq 1$, or $w_1\cdot w_2$ if $st=1$. Then, we say that we can go from $w$ to $w'$ through an \emph{elementary $M$-transformation of type I}.

Assume that $w$ can be written as $w=w_1\cdot[s,t]_m\cdot w_2$, where $w_1,w_2\in S(X)^*$, $s,t\in S(X)$, $m\geq 2$, $\overline{[s,t]_m}=\overline{[t,s]_m}$ and $l(\overline{[s,t]_m})=m$. Set $w'=w_1\cdot[t,s]_m\cdot w_2$. Then we say that we can go from $w$ to $w'$ through an \emph{elementary $M$-transformation of type~II}.

We say that $w$ is \emph{$M$-reduced} if its length cannot be shortened by any finite sequence of elementary $M$-transformations.
\end{definition}

The following result was proved in \cite[Theorem 2.2]{PS23} and is one of the main properties of Dyer groups that we will be using.

\begin{theorem}[{{{\citealp[Theorem 2.2]{PS23}}}}]\label{Theorem_transformations}
Let $(D,X)$ be a Dyer system, then:
\begin{enumerate}
    \item for all $w\in S(X)^*$, $w$ is reduced if and only if $w$ is $M$-reduced, and
    \item for all $w,w'\in S(X)^*$, if $w$ and $w'$ are both reduced and $\overline{w}=\overline{w'}$, then we can go from $w$ to $w'$ through a finite sequence of elementary $M$-transformations of type II.
\end{enumerate}
We express this by saying that $(D,X)$ has \emph{Property $\mathcal{D}$}.
\end{theorem}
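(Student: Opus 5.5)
The plan is to adapt Tits's proof of the solution to the word problem for Coxeter groups: construct an action of $D$ on the set of type~II equivalence classes of $M$-reduced syllabic words. Let $\Omega$ be the set of $M$-reduced words in $S(X)^*$, modulo the equivalence relation $\sim$ generated by elementary $M$-transformations of type~II. Type~II moves preserve length and $M$-reducedness, so $\sim$ is well defined on $\Omega$. For each syllable $s=x_v^{\alpha}$ I will define a map $\lambda_s\colon\Omega\to\Omega$ by cases on a class $[w]$.
\begin{itemize}
\item If some representative of $[w]$ begins with a syllable $x_v^{\beta}$, say $w\sim (x_v^{\beta})\cdot w_1$, set $\lambda_s[w]=[(x_v^{\alpha+\beta})\cdot w_1]$, or $[w_1]$ when $\alpha+\beta=0$ in $\mathbb{Z}_{f(v)}$.
\item Otherwise, set $\lambda_s[w]=[(s)\cdot w]$.
\end{itemize}

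The first step is to show that $\lambda_s$ is well defined. For this I need an exchange-type lemma: if $w$ is $M$-reduced and $w\sim(x_v^{\beta})\cdot w_1\sim(x_v^{\gamma})\cdot w_2$, then $\beta=\gamma$ and $w_1\sim w_2$. I would prove it by induction on the length of the chain of type~II moves, tracking the first syllable in the spirit of Tits's argument. I also need to check that the output of $\lambda_s$ is again $M$-reduced; otherwise the word could be shortened, producing a shorter word equivalent to $w$, contradicting $M$-reducedness of $w$.

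A companion lemma handles distinct vertices. If $[w]$ has representatives beginning with syllables in distinct vertices $u$ and $v$, then $m(u,v)<\infty$, and $[w]$ has a representative beginning with the alternating word $[u,v]_{m}$. By the labelling condition of Definition~\ref{defdyer}, this forces either $f(u)=f(v)=2$ or $m(u,v)=2$. So the only braid-type interactions are Coxeter-type interactions between involutions, and commutations. I expect this pair of lemmas, and especially the analysis of a chain of type~II moves that changes the leading syllable, to be the main obstacle. I would first try to isolate the rank-two case, namely the dihedral groups and the products $\mathbb{Z}_{f(u)}\times\mathbb{Z}_{f(v)}$, where the reduced words can be listed explicitly, and then argue by induction on length as Tits does.

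Next, check that the maps satisfy the defining relations of $D(\Gamma)$.
\begin{itemize}
\item The relations $\lambda_{x_v^\alpha}\lambda_{x_v^\beta}=\lambda_{x_v^{\alpha+\beta}}$ and $\lambda_{x_v^{f(v)}}=\mathrm{id}$ follow directly from the definition.
\item For the braid relations, with $m(u,v)$ finite, it suffices to use the rank-two classification from the previous step, applied to the leading $u$/$v$-part of a representative.
\end{itemize}
Hence $x_v^\alpha\mapsto\lambda_{x_v^\alpha}$ extends to an action of $D$ on $\Omega$. By induction on length, an $M$-reduced word $w=(s_1,\dots,s_l)$ satisfies $\overline{w}\cdot[\emptyset]=[w]$: at each step the case distinction must land in the prepend case, since otherwise $w$ would not be $M$-reduced.

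Both parts of Theorem~\ref{Theorem_transformations} follow. For (2), if $w,w'$ are reduced, they are in particular $M$-reduced. If moreover $\overline w=\overline{w'}$, then $[w]=\overline w\cdot[\emptyset]=[w']$, so $w\sim w'$. For (1), reduced implies $M$-reduced trivially. Conversely, let $w$ be $M$-reduced and pick a reduced word $w'$ with $\overline{w'}=\overline w$. Then $[w]=[w']$, and type~II moves preserve length, so $l=l(\overline w)$.
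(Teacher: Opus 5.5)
The paper gives no argument for this statement; it is quoted from \cite[Theorem~2.2]{PS23}. So there is no in-paper proof to compare with, and your Tits-style construction would be a self-contained alternative route. Its outer shell is sound. Granted an action of $D$ on $\Omega$ with the properties you list, your check that an $M$-reduced word always falls in the prepend case is correct, and so is your deduction of both parts of the theorem. For a from-scratch proof you should define type~II moves combinatorially from $\Gamma$: commutations of syllables at vertices with $m(u,v)=2$, and alternating words of length $m(u,v)$ between involutions. The paper's definition refers to equalities and lengths in $D$, which you cannot use yet.

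The gap is that the combinatorial core making $\lambda_s$ a well-defined map $\Omega\to\Omega$ is either deferred or misjustified.

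\emph{The prepend case.} In the second case of $\lambda_s$ you need $(s)\cdot w$ to be $M$-reduced, and your one-line argument does not apply there. Shortening $(s)\cdot w$ gives a shorter word equivalent to $(s)\cdot w$, not to $w$, so nothing is contradicted. Your argument is fine in the first case: for $f(v)=2$ that case always returns $[w_1]$, and for $f(v)\geq 3$ (including $\infty$) whether type~I or~II moves apply does not depend on the exponent of a $v$-syllable. What the second case needs is a deletion-type lemma: if $w$ is $M$-reduced and $(x_v^{\alpha})\cdot w$ is not, then $w\sim(x_v^{\beta})\cdot w_1$ for some $\beta$ and $w_1$. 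This is not among the lemmas you list, and it carries most of the content of part~(1).

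\emph{The exchange lemma.} Its conclusion $w_1\sim w_2$ is a cancellation property; for Coxeter systems it is left cancellation in the Artin monoid, restricted to reduced words. Induction on the length of the chain of moves does not close by itself. Once the chain changes the leading syllable, the remaining sub-chain has endpoints with different leading vertices, so you need the companion lemma for shorter words.

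\emph{The braid-relation check.} This needs each class to factor as $[p\cdot w']$, with $p$ a reduced word in $x_u,x_v$ and $[w']$ starting with neither. That factorisation again follows only from these lemmas.

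The standard way through is a simultaneous induction on word length, proving the deletion, cancellation and companion lemmas together. The Dyer-specific input is the easy part: syllables at vertices of order $\geq 3$ interact only by commutation. So the proposal is a viable plan, but it leaves the whole combinatorial core of Tits's argument unproved.
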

    
\begin{definition}    
    The \emph{support} of a syllabic word $w = (x_1^{\alpha_1}, x_2^{\alpha_2}, \dots, x_l^{\alpha_l})$ is $\Supp(w) = \{x_1, x_2, \dots, x_l\}$. For $g\in D$, choose a reduced syllabic word $w = (x_1^{\alpha_1}, x_2^{\alpha_2}, \dots, x_l^{\alpha_l})$ representing $g$. We define $\Supp(g)= \Supp(w) = \{x_1, x_2, \dots, x_l\}$. We also define the \emph{exponent sum} of a generator $x$ in $g$ as the sum $\sum_{i\in I}\alpha_{i}$ where
    \[
    I=\{i\mid x_i\in\Supp(g), x=x_i\}.
    \]
\end{definition}

\begin{remark}
    By \cite[Theorem 2.2]{PS23}, the definitions of $\Supp(g)$ and the exponent sum do not depend on the choice of reduced word $w$ representing $g$.
\end{remark}

% For a Dyer system $(D,X)$ with Dyer graph $(\Gamma,f,m)$ and for each $Y\subseteq X$, we write~$\Gamma_Y$ for the full subgraph spanned by $\{v\in V\mid x_v\in Y\}$, and $D_Y$ for the subgroup of $D$ generated by $Y$.

% \begin{definition}
%     The subgroups $D_Y$ with $Y\subseteq X$ are called \emph{standard parabolic subgroups} of $D$, and subgroups of the form $gD_Yg^{-1}$ for $Y\subseteq X$, $g\in D$ are called \emph{parabolic subgroups} of $D.$
% \end{definition}

%Let $(\Gamma,f,m)$ be a Dyer graph. It follows from \cite[Proposition~2.7]{PS23} that for any parabolic subgroup $gD_Yg^{-1}$, the pair $(gD_Yg^{-1},gYg^{-1})$ is a Dyer system with Dyer graph $(\Gamma_Y,f_Y,m_Y)$ where~$f_Y$ and~$m_Y$ are the restrictions of $f,m$ to the vertices and edges of $\Gamma_Y$, respectively.

It was shown in \cite[Lemma~2.5]{PS23} that standard parabolic subgroups of Dyer groups are \emph{convex}, which means that an element $g\in D$ belongs to~$D_{Y}$ if and only if its support belongs to~$Y$.

\begin{proposition}[{{{\citealp[Proposition~2.8]{PS23}}}}]\label{Proposition 2.8}
Let $(D,X)$ be a Dyer system, let $Y\subseteq X$, and $g\in D$.
\begin{enumerate}
    \item There exists a unique element $g_0$ in $gD_Y$ of minimal syllabic length, and this elements satisfies $l(g_0h)=l(g_0)+l(h)$ for all $h\in D_Y$.
    \item There exists a unique element $g_0$ in $D_Yg$ of minimal syllabic length, and this elements satisfies $l(hg_0)=l(g_0)+l(h)$ for all $h\in D_Y$.
\end{enumerate}
\end{proposition}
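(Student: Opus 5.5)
We prove part 1; part 2 follows by applying part 1 to $g^{-1}$ and inverting, since inversion preserves syllabic length and maps $gD_Y$ onto $D_Yg^{-1}$. The tools are Theorem~\ref{Theorem_transformations} (Property $\mathcal{D}$) and convexity of $D_Y$ \cite[Lemma~2.5]{PS23}.

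\emph{Existence of a minimal element with the additivity property.} Choose $g_0\in gD_Y$ of minimal syllabic length and a reduced syllabic word $w$ for $g_0$. We first check that no reduced word for $g_0$ ends with a syllable $y^\alpha$ with $y\in Y$. If one did, then $g_0y^{-\alpha}\in gD_Y$ would have length at most $l(g_0)-1$, contradicting minimality.

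Next fix $h\in D_Y$ with reduced word $u$, so all syllables of $u$ lie in $Y$. We want to show that $w\cdot u$ is reduced. Suppose not. By Theorem~\ref{Theorem_transformations}(1), $w\cdot u$ is not $M$-reduced, so some sequence of elementary $M$-transformations shortens it.

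The key point is to locate the first length-reducing (type I) move. We argue by induction on $l(h)$: write $u=u'\cdot(y^\beta)$ and assume $w\cdot u'$ is reduced. If $w\cdot u$ is not reduced, then by Property $\mathcal{D}$ some reduced word for $\overline{w\cdot u'}$ ends in a syllable $y^\gamma$ with the same $y$. This is the standard exchange-type consequence: reduced words related by type II moves can be rearranged until a type I merge occurs at the end.

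We then use the minimal coset representative structure. The element $\overline{w\cdot u'}=g_0h'$ has reduced words obtained from $w\cdot u'$ by type II moves. We need to argue that any reduced word of $g_0h'$ ending in $y^\gamma$ can be modified by type II moves inside a suffix supported in $Y$ (using $m(y,z)$ relations among generators). In the worst case this pushes a $Y$-syllable to the end of a reduced word for $g_0$, which contradicts the first step.

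A cleaner route for this step is to prove the standard lemma that, for a reduced word $v$, the set of final syllables $\{x : \text{some reduced word for }\overline v\text{ ends in }x^{*}\}$ generates a parabolic subgroup in which $\overline v$ has a reduced suffix. I would prove it via an analysis of type II moves on the last two letters, in the spirit of Coxeter theory. This is where I expect the main obstacle. The difficulty is a dihedral braid move $[s,t]_m$ whose last letters straddle the boundary between $w$ and $u'$; one must show that then both $s,t\in Y$ partially occur in $w$'s tail, which again yields a $Y$-ending of $g_0$.

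\emph{Uniqueness.} If $g_0,g_1\in gD_Y$ are both minimal, then $g_1=g_0h$ with $h\in D_Y$. Additivity gives $l(g_1)=l(g_0)+l(h)$, so $l(h)=0$, hence $h=1$ and $g_1=g_0$.
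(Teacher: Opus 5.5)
You handle the peripheral parts correctly: part~(2) follows from part~(1) by inversion, a minimal $g_0$ has no reduced word ending in a $Y$-syllable, and uniqueness follows from additivity. The real content of the proposition, however, is the additivity $l(g_0h)=l(g_0)+l(h)$, and you do not prove it. (The paper quotes this result from Paris--Soergel without proof, so you cannot lean on an in-paper argument.)

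The first missing piece is the ``exchange-type consequence''. From $w\cdot u'$ reduced and $w\cdot u'\cdot(y^\beta)$ not reduced, you conclude that some reduced word for $\overline{w\cdot u'}$ ends in a $y$-syllable. Theorem~\ref{Theorem_transformations} does not give this directly. Part~(1) only provides \emph{some} shortening sequence of $M$-transformations, and every word before its first type~I move is non-reduced, so part~(2) says nothing about those words. Moreover, type~II moves involving the appended syllable change which generator sits at the end; for example $\cdots y\,a\cdot y\mapsto\cdots a\,y\,a$ when $m(a,y)=3$. In a Coxeter group parity saves you, since $l(\overline v\,s)=l(\overline v)\pm1$. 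With syllabic length you must also exclude $l(\overline v\,y^\beta)=l(\overline v)$ when no reduced word for $\overline v$ ends in a power of $y$, and that needs a genuine argument.

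Second, even granting this weak exchange, knowing that $g_0h'$ has \emph{some} reduced word ending in a $y$-syllable does not tell you where $y^\beta$ is absorbed in $w\cdot u'$. What you need is a strong exchange (deletion) statement that pins down a specific syllable of $w\cdot u'$. You sketch two routes: pushing a $Y$-syllable to the end of $w$ by type~II moves, or a lemma on final syllables generating a parabolic subgroup. You prove neither, and you explicitly leave braid moves straddling the boundary between $w$ and $u'$ as the ``main obstacle''. That case is the heart of the proposition, so the proposal does not yet contain a proof of its key step.

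There is also a feature absent in the Coxeter case. When the absorbing syllable $s_i$ of $w$ has a generator $z$ of order $\ge 3$, it may be merged ($s_i\mapsto s_iz^\gamma\neq 1$) rather than deleted. Locating it inside $w$ then only yields another element of $g_0D_Y$ of the \emph{same} length as $g_0$, which does not contradict minimality. You would need an extra step to close the argument, for example an exponent-sum and centraliser argument showing that $z=y$ and that all later syllables of $w$ commute with $y$, so that $g_0$ acquires a $Y$-ending.
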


We note that this result is also true for the usual word length \cite[Proposition~3.5]{ParisVarghese}.

\medskip

%Dyer groups generalize other well-known families of groups such as Coxeter groups, graph products of cyclic groups and right-angled Artin groups (RAAGs). In particular, a Coxeter group is a Dyer group in which $f(v)=2$ for all $v\in V$, while a RAAG is a Dyer group in which $f(v)=\infty$ for all $v\in V$.

A Coxeter group $W$ with defining Dyer graph $\Gamma$ is called \emph{irreducible} if $\Gamma$ is connected. Otherwise, we can decompose $W$ as the direct product of the irreducible parabolic subgroups that correspond to the connected components. Finite irreducible Coxeter groups are classified via their defining graphs, as shown in Figure~\ref{coxeter}. For every finite Coxeter system $(W,X)$ there is a unique element of maximal length which we will denote by $w_0$ and which satisfies $w_0^2=1$ and $w_0 X w_0= X$. This gives a permutation $X\to X$ of order~2 which we denote by~$\delta_0$.

\begin{figure}
  \centering
  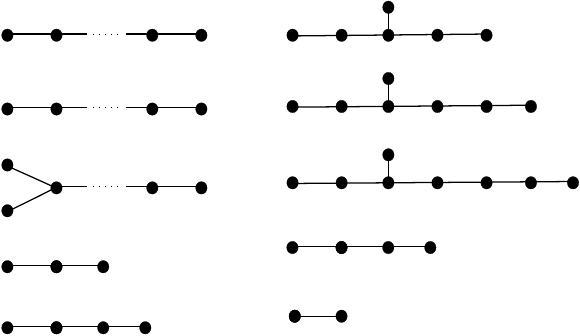
  \medskip
  \caption{Classification of irreducible Coxeter graphs of finite type. The unlabelled edges are labelled with $3$. The labelling in the vertices is only meant for counting and is not to be confused with vertex labelling of Dyer graphs.}
  \label{coxeter}
\end{figure}

Suppose that $(D,X)$ is a Dyer system and $\Gamma$ is its corresponding Dyer graph. Given $Y\subseteq X$ we will denote by $Y_2$ the set of elements of order $2$ in $Y$, and $\Gamma_{Y_2}$ be the induced subgraph of $\Gamma$. Similarly, we will denote by $Y_\infty$ the set of elements of infinite order, and by $Y_p$ the set of elements that have neither order $2$ or infinite. Notice that $Y=Y_2\sqcup Y_\infty\sqcup Y_p$. Also notice that $D_{Y_2}$ is a Coxeter group while $D_{Y_\infty}$ is a right-angled Artin group. We will usually denote $D_2:=D_{Y_2}$ and so on.

For an element $g$ of a group, we will denote its order by $o(g)$.

A Dyer group $D$ with Dyer graph $\Gamma$ is said to be of \emph{spherical type} if there are no edges labelled by $\infty$ and $D_2$ is finite. Then we have that that $D=D_{2}\times D_p \times D_\infty$, $D_\infty=\mathbb{Z}^{|Y_\infty|}$ and $D_p$ is a direct product of finite cyclic groups.

\section{Conjugacy between parabolic subgroups}\label{conjugacyparabolic}

The aim of this section is to answer the following question.

\begin{question}
Let $(D,X)$ be a Dyer system, and let $Y,Y'\subseteq X$ be two subsets of standard generators. When are the standard parabolic subgroups $D_Y,D_{Y'}$ conjugate?
\end{question}

Suppose that $(D,X)$ is a Dyer system with defining Dyer graph $\Gamma$. First, we will construct a graph $G$ that will encode the conjugacies between parabolic subgroups. The vertices of $G$ are the subsets $Y\subseteq X$. An edge of $G$ is a triple $(Y,t,t')$ satisfying the following:
\begin{enumerate}[label=(\alph*)]
\item $Y\subseteq X$
\item Both $t$ and $t'$ have order 2 and belong to the same connected component~$\Gamma_0$ of $\Gamma_{Y}$ and to the same connected component $\Gamma_0'$ of $\Gamma_{Y_2}$, and moreover these two coincide, $\Gamma_0=\Gamma_0'$.
\item $\Gamma_0\in\{A_l\mid l\geq2\}\cup\{D_l\mid l\geq5,l\text{ odd}\}\cup\{E_6\}\cup\{I_2(p)\mid p\geq5,p\text{ odd}\}$. These graphs correspond to what we call \emph{twistable} finite irreducible Coxeter groups. 
\item $t'=\delta_0(t)$ i.e. $t'=w_0 t w_0$ and $t\neq t'$, where $w_0$ is the element of maximal length in $W_{\Gamma_0}=D_{\Gamma_0}$.
\end{enumerate}

The edge $(Y,t,t')$ joins $Y\setminus \{t\}$ with $Y\setminus \{t'\}$. Notice that in this case one has $Y\setminus \{t\}=w_0(Y\setminus\{t'\})w_0$, since:
\begin{itemize}
    \item the condition on the connected components in (2) means precisely that every generator of order bigger than 2 in $Y$ commutes with every element in $D_{\Gamma_0}$, and
    \item $(Y\setminus \{t\})_2=w_0(Y\setminus\{t'\})_2 w_0$.
\end{itemize}

\medskip

\begin{remark}\label{G2inG}
This construction is analogous to the one described for Coxeter systems in \cite[Section 4]{P97}, originally given in Krammer's algorithm \cite[Chapter 3.1]{K94}, to decide whether two parabolic subgroups of a Coxeter or Artin system are conjugate. Indeed, it is straightforward to check that the full subgraph $G_2$ of $G$ spanned by the subsets $Y\subseteq X_2$ coincides with the graph associated to the Coxeter system $(D_{X_2},X_2)$ by Krammer's construction. In fact, $G_2$ is a union of connected components of $G$. Also notice that a vertex corresponding to a subset $Y\subseteq X$ can be connected to a different vertex in $G$ only if $\Gamma_Y$ contains a connected component consisting only of order~2 elements.
\end{remark}

\begin{example}
Consider first the Coxeter group with graph
\begin{center}
\begin{tikzpicture}[scale=0.8]
\draw(0,0)--(3,0);
\filldraw(0,0) circle[radius=0.1];
\filldraw(1,0) circle[radius=0.1];
\filldraw(2,0) circle[radius=0.1];
\filldraw(3,0) circle[radius=0.1];

\node at (-0.5,0)[left] {$A_4$=};
\end{tikzpicture}
\end{center}
where as before the unlabelled edges are labelled with 3. This is a finite irreducible twistable Coxeter group. Figure \ref{graphb5} is a picture of its associated graph~$G$, where we represent each subset of vertices as the full subgraph they span (in black). Subgraphs in the same row have the same number of vertices, and edges can only join subgraphs in the same row. For better visualization, the subgraphs are placed as follows. Using the same notation as above, for an edge $(Y,t,t')$, the subgraph placed below the middle of the edge is that spanned by~$Y$. The edge joins two different subgraphs $Y\setminus\{t\}$ and  $Y\setminus\{t'\}$ that are permuted by the twist in $Y$. Note that there can be several edges labelled with the same subset $Y$.

\begin{figure}[h]
\centering
\includegraphics[width=\linewidth]{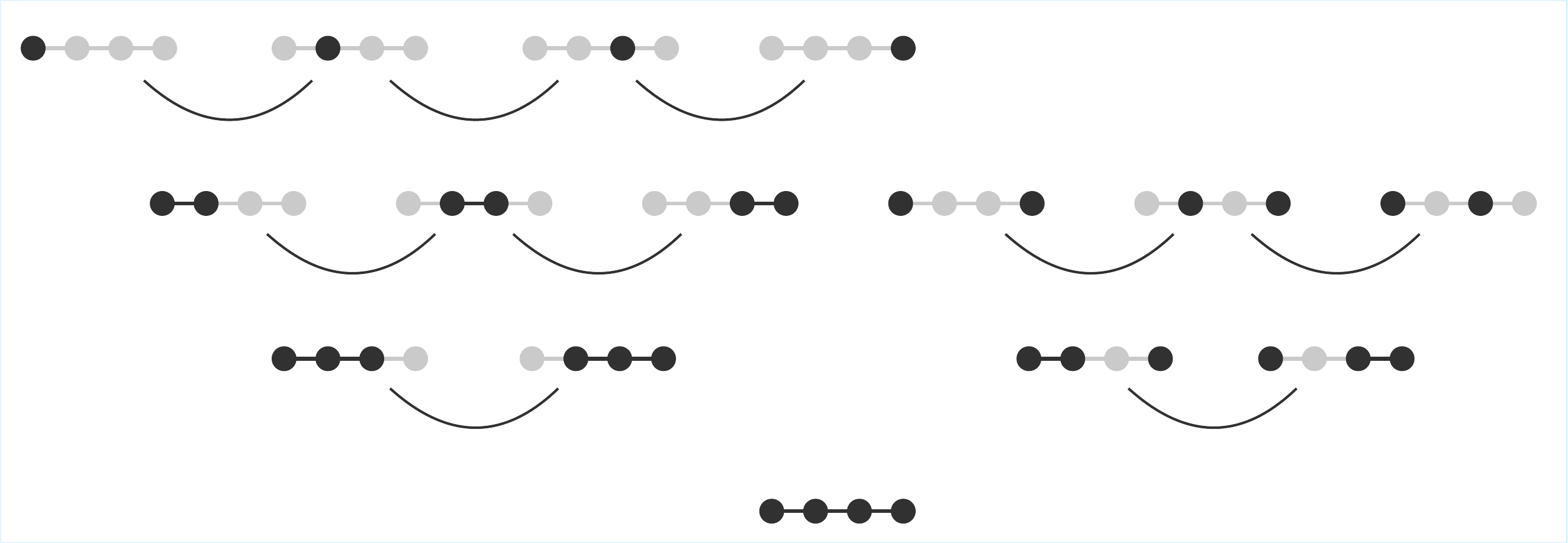}
  \caption{Graph $G$ corresponding to the Coxeter group of type $A_4$.}
  \label{graphb5}
\end{figure}

Now let us add a new generator with order bigger than 2, which we will color with white to distinguish it from the others:
\begin{center}
\begin{tikzpicture}[scale=0.8]
\draw(0,0)--(3,0);
\filldraw(0,0) circle[radius=0.1];
\filldraw(1,0) circle[radius=0.1];
\filldraw(2,0) circle[radius=0.1];
\filldraw(3,0) circle[radius=0.1];
\draw(3,0)--(3,1);
\filldraw[fill=white](3,1) circle[radius=0.1];
\node at (3,0.5)[right] {$\infty$};
\node at (-0.5,0)[left] {$\Gamma$=};
\end{tikzpicture}
\end{center}
The graph $G$ associated to the corresponding Dyer system is the disjoint union of the graph in Figure \ref{graphb5} and the graph in Figure \ref{graphb5dot}. Note that the effect of adding the new generator is that some edges disappear. As we will prove, two different connected subgraphs containing a generator of order bigger than 2 cannot be conjugate.

\begin{figure}[h]
\centering
\includegraphics[width=\linewidth]{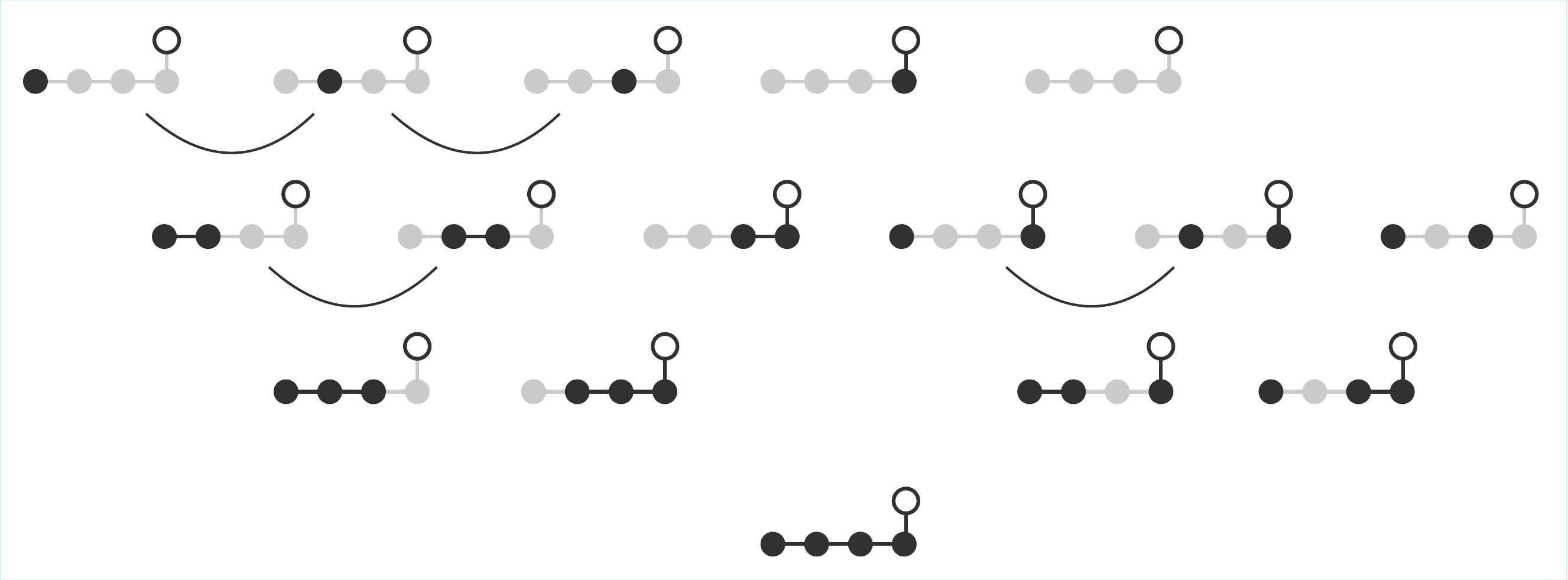}
  \caption{Part of the graph $G$ corresponding to the Dyer system with graph $\Gamma$.}
  \label{graphb5dot}
\end{figure}
    
\end{example}

We will now prove some lemmas that we will use in the proof of the main theorem of this section.

\begin{lemma}\label{removegenerator}
Let $G$ be a group that satisfies Property $\mathcal{D}$, and let $w,w'$ be two reduced equivalent words in $G$. Let $s$ be a standard generator that can be involved only in commutation relations. Then $w(\hat{s})$ and $w'(\hat{s})$ are also equivalent, where $w(\hat{s})$ denotes the word $w$ with all the occurrences of $s$ removed.
\end{lemma}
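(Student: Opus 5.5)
By Theorem~\ref{Theorem_transformations}(2), since $w$ and $w'$ are reduced and represent the same element, there is a finite chain $w=w_0,w_1,\dots,w_k=w'$ of reduced syllabic words in which each $w_{i+1}$ is obtained from $w_i$ by a single elementary $M$-transformation of type~II. The plan is to show that deleting every occurrence of $s$ sends each step of this chain to an equality of represented elements. Concretely, for each $i$ we want $\overline{w_i(\hat s)}=\overline{w_{i+1}(\hat s)}$; chaining these equalities then gives $\overline{w(\hat s)}=\overline{w'(\hat s)}$. Here ``occurrences of $s$'' means all syllables that are powers of $s$.

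So fix one type~II move $w_1\cdot[a,b]_m\cdot w_2\mapsto w_1\cdot[b,a]_m\cdot w_2$, where $a,b$ are syllables with $\overline{[a,b]_m}=\overline{[b,a]_m}$ and $l(\overline{[a,b]_m})=m$. Deleting $s$ acts letterwise, so it does the same thing to the common prefix $w_1$ and suffix $w_2$. It therefore suffices to compare $[a,b]_m(\hat s)$ with $[b,a]_m(\hat s)$. There are two cases.
\begin{itemize}
\item If neither $a$ nor $b$ is a power of $s$, deletion does not touch the alternating block. The two results then differ by exactly the same type~II move, so they represent the same element.
\item If one of them, say $a$, is a power of $s$, then $b$ is a power of some other generator $t$, since distinct syllables of a reduced word that alternate must involve different generators. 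By hypothesis $s$ appears only in commutation relations, so the braid between $a$ and $b$ can only be the commutation $ab=ba$ with $m=2$. In either order, deleting $s$ leaves just $(b)$, so the two deleted words coincide.
\end{itemize}

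The main obstacle is justifying, in the second case, that $m=2$. That is, we must rule out a relation $\overline{[a,b]_m}=\overline{[b,a]_m}$ with $m\ge3$ and reduced length $m$ when $a$ is a power of $s$. In a Dyer group, such an equality between alternating products of powers of two distinct generators $x,y$ with $m\geq 3$ forces $x,y$ to have order~$2$ and $m(x,y)=m$. This is because the parabolic $D_{\{x,y\}}$ is a free product when $m(x,y)=\infty$, abelian when $m(x,y)=2$, and a dihedral group otherwise. I would argue this via convexity and the structure of rank-two Dyer groups. That argument contradicts the assumption that $s$ is involved only in commutation relations.

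One should also note that the chain from Theorem~\ref{Theorem_transformations}(2) can be taken through type~II moves only, so no type~I merges of $s$-syllables occur. Even if they did, merging or cancelling powers of $s$ would not affect the deleted word.
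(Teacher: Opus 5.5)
Your proposal is correct and follows the paper's proof: the paper inducts on the number of type~II moves, which is your chaining argument, and treats a single move $w_1\cdot[a,b]_m\cdot w_2\mapsto w_1\cdot[b,a]_m\cdot w_2$ with the same two cases. When $s$ is involved in the move, the paper likewise gets $m=2$ and $w(\hat s)=w'(\hat s)$ as words. Your justification of $m=2$, via the rank-two subgroup $D_{\{s,t\}}$ being a free or direct product, is a detail the paper simply asserts.
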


\begin{proof}
We prove it by induction on the number $r$ of $M$-transformations that we need to get from $w$ to $w'$.

Suppose that $r=1$. We can write $w = w_1\cdot [a,b]_{m(a,b)}\cdot w_2$ and $w' = w_1\cdot [b,a]_{m(a,b)}\cdot w_2$. Since $\overline{[a,b]_{m(a,b)}(\hat{s})} = \overline{[b,a]_{m(a,b)}(\hat{s})}$, it follows that $\overline{w(\hat{s})} = \overline{w'(\hat{s})}$. In particular, by Property~$\mathcal{D}$, one can use $M$-transformations of type II to get from $w(\hat{s})$ to $w'(\hat{s})$. More specifically, if $s\in\{a,b\}$, we have $m(a,b)=2$ and so $w(\hat{s}) = w'(\hat{s})$ (as words) and if $s\notin\{a,b\}$, one needs one $M$-transformation of type~II to get from~$w(\hat{s})$ to~$w'(\hat{s})$.

Now assume that the result holds for $r-1$. Let $w''$ be the word we get after applying $r-1$ $M$-transformations to~$w$, which is one $M$-transformation apart from~$w'$. Note that~$w''$ is also reduced. By the induction hypothesis, we see that~$w(\hat{s})$ and~$w''(\hat{s})$ are equivalent. And then by the case $r=1$, we have that $w''(\hat{s})$ and $w'(\hat{s})$ are equivalent as well. By transitivity, we have the desired result. 
\end{proof}

\begin{remark}\label{remark_hat}
Let $(D,X)$ be a Dyer system. Let $g\in D$, and let $w$ be a syllabic reduced word representing $g$, so that $g=\overline{w}$. Let $\hat{w}$ denote the same word without every occurrence of a standard generator of order greater than~2. Then by Lemma~\ref{removegenerator}, the element $\overline{\hat{w}}\in D$ does not depend on the choice of $w$, hence we may denote it $\hat{g}:=\overline{\hat{w}}$. We shall keep this notation from now on. We also note that the map $\hat{\phantom{o}}\colon\to D_2$ given by $g\mapsto \hat{g}$ is a group homomorphism.
\end{remark}

\begin{theorem}\label{equivalentconditions}
Let $(D,X)$ be a Dyer system.  Let $Y, Y' \subseteq X$. Then the following statements are equivalent:

\begin{enumerate}
    \item\label{cond subsets} There exists $\alpha\in D$ such that $\alpha Y \alpha^{-1} = Y'$.
    \item\label{cond subgroups} There exists $\beta \in D$ such that $\beta D_Y \beta^{-1} = D_{Y'}$.
    \item\label{cond graph} $Y$ and $Y'$ are in the same connected component of $G$.
\end{enumerate}
\end{theorem}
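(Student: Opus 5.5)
We will prove the chain of implications (\ref{cond graph})$\Rightarrow$(\ref{cond subsets})$\Rightarrow$(\ref{cond subgroups})$\Rightarrow$(\ref{cond graph}).

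\textbf{(\ref{cond graph})$\Rightarrow$(\ref{cond subsets}).} This is immediate from the construction of~$G$. As observed above, an edge $(Z,t,t')$ gives
\[
Z\setminus\{t\}=w_0\,(Z\setminus\{t'\})\,w_0 .
\]
Composing these conjugators along a path in $G$ from $Y$ to $Y'$ produces an element $\alpha$ with $\alpha Y\alpha^{-1}=Y'$.

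\textbf{(\ref{cond subsets})$\Rightarrow$(\ref{cond subgroups}).} This is trivial, since $D_Y$ is generated by $Y$.

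\textbf{(\ref{cond subgroups})$\Rightarrow$(\ref{cond graph}).} This is the heart of the proof, and it splits into three steps.

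\emph{Step 1: reduce to a minimal conjugator.} By Proposition~\ref{Proposition 2.8} we may replace $\beta$ by the unique minimal-length element of $D_{Y'}\beta D_Y$. Concretely, we first take the minimal element in $\beta D_Y$ and then the minimal element in $D_{Y'}\beta$, checking that the second step does not destroy the first. The resulting $\beta$ is $(Y',Y)$-reduced: $l(h\beta)=l(h)+l(\beta)$ for $h\in D_{Y'}$ and $l(\beta k)=l(\beta)+l(k)$ for $k\in D_Y$.

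\emph{Step 2: the generators of order greater than~$2$ are fixed.} For $y\in Y\setminus Y_2$, the element $\beta y\beta^{-1}$ lies in $D_{Y'}$. Its support is then contained in $Y'$ by convexity, and it is a reduced product once $\beta$ is $(Y',Y)$-reduced. Using Property~$\mathcal D$ (Theorem~\ref{Theorem_transformations}), the syllable $y^{\pm1}$ can only move past letters of $\beta$ through commutation relations, since $m(y,\cdot)\in\{2,\infty\}$. Comparing the exponent sums of $y$ and supports then forces $y\in Y'$ and forces $y$ to commute with every letter of $\beta$. Hence $Y\setminus Y_2=Y'\setminus Y_2$, and $\beta$ centralises these generators.

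\emph{Step 3: reduce to Coxeter groups.} Apply the homomorphism $g\mapsto\hat g$ of Remark~\ref{remark_hat}, which kills the generators of order greater than~$2$. Since $\beta$ commutes with $Y_p\cup Y_\infty$ by Step~2, the conjugation $\hat\beta D_{Y_2}\hat\beta^{-1}=D_{Y'_2}$ holds in the Coxeter group $D_{X_2}$. Moreover, $\beta$ itself should lie in $D_{X_2}$ after Step~1, because a letter of order greater than~$2$ in a reduced word for a minimal $\beta$ could be pushed out to the edge. Here we can apply Krammer's and Paris's theorem \cite[Theorem 4.1]{P97} to the Coxeter system $(D_{X_2},X_2)$. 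It states that such a minimal $\beta$ is a product of the elementary moves $w_0$ that correspond to edges of the Krammer graph $G_2$ (Remark~\ref{G2inG}), where each move is taken inside a twistable component $Z_2$ with $Z=Y_2^{(i)}\cup\{t\}$.

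\emph{Main obstacle.} The key difficulty is to check that each such Coxeter edge lifts to an edge of~$G$ with the generators $Y\setminus Y_2$ added. For this we need the component $\Gamma_0$ of $\Gamma_{Z}$ to coincide with its component in $\Gamma_{Z_2}$, i.e.\ no generator of $Y\setminus Y_2$ is adjacent to $\Gamma_0$. The plan is to show that every letter of each intermediate ribbon commutes with $Y\setminus Y_2$. This follows from Step~2 applied to $\beta$ together with the fact that the decomposition into ribbons in \cite{P97} uses only letters from $\Supp(\beta)$. So every letter $t$ of $\beta$ commutes with all $y\in Y\setminus Y_2$; since $y$ is of order greater than~$2$, the edge condition $m=\infty$ means non-commuting implies adjacency, so adjacency fails. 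Thus each step is an edge of $G$, and $Y$ and $Y'$ are joined by a path.

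\textbf{Algorithm.} The graph $G$ is finite and computable from $\Gamma$, so checking connectivity of $Y$ and $Y'$ in $G$ decides the equivalent conditions.
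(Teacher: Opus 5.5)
Your architecture is viable: take a minimal conjugator, show the generators of order $>2$ are fixed and centralised, push down to $D_{X_2}$ via $g\mapsto\hat g$, and lift Krammer's path edge by edge using supports. The route differs from the paper's. The paper proves (2)$\Rightarrow$(1) separately, passes to a conjugator in $D_2$ (its Claim on $\hat\alpha$), and lifts $G_2$-edges by appealing to its claim that a component containing a generator of order $>2$ cannot be moved.

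\textbf{The gap.} Step~3 rests on a false assertion: after Step~1, $\beta$ need \emph{not} lie in $D_{X_2}$. Take $X=\{s,x\}$ with $o(s)=2$, $o(x)=\infty$, $m(s,x)=2$, and $Y=Y'=\{s\}$, $\beta=x$. Then $\beta D_Y\beta^{-1}=D_Y$, and $D_{Y'}\beta D_Y=\{x,sx\}$ has minimal element $x\notin D_{X_2}$. A letter of order $>2$ pushed to the end of the word is only absorbed if it lies in $Y$ or $Y'$. So you cannot apply Krammer's results to $\beta$ itself.

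\textbf{The repair.} Let $\gamma$ be the minimal element of $\hat\beta D_{Y_2}$ in the Coxeter group $D_{X_2}$. Then $\gamma D_{Y_2}\gamma^{-1}=D_{Y'_2}$, so Krammer's Theorem~3.1.3 and Corollary~3.1.7 apply to $\gamma$. Moreover $\Supp(\gamma)\subseteq\Supp(\hat\beta)\subseteq\Supp(\beta)\cap X_2$, so by Step~2 every letter of $\gamma$ still commutes with every $y\in Y\setminus Y_2$.

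\textbf{Unstated facts in the lifting.} Your sentence ``the decomposition into ribbons uses only letters from $\Supp(\beta)$'' carries the whole lifting step and needs two facts stated explicitly.
\begin{enumerate}
\item[(i)] The decomposition $\gamma=r_n\cdots r_1$ into elementary ribbons is length-additive, $l(\gamma)=\sum_i l(r_i)$ (Deodhar; Brink--Howlett). This gives $\Supp(r_i)\subseteq\Supp(\gamma)$.
\item[(ii)] Each $r_i=w_Uw_{U\setminus\{t\}}$ has support equal to the \emph{whole} irreducible component $U$ containing $t$. Indeed, its inversion set $\Phi^+_U\setminus\Phi^+_{U\setminus\{t\}}$ contains a root of full support.
\end{enumerate}
Fact (ii) is indispensable. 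Edge condition (b) requires that no vertex of $U$, not merely $t$, be adjacent to $Y\setminus Y_2$. The vertices of $U\setminus\{t\}$ come from the intermediate vertex $Z^{i-1}$ of the path, about which Step~2 says nothing directly; already $Y_2$ may contain generators adjacent to $Y\setminus Y_2$.

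A minor point: in Step~2, ``reduced product'' should be the identity $l(\beta y)=l(\beta y\beta^{-1})+l(\beta)$. This gives $l(\beta y\beta^{-1})=1$, which you need before comparing exponent sums.

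With $\gamma$ in place of $\beta$ and (i)--(ii) made explicit, your argument goes through. The resulting support-based lifting is arguably more transparent than the paper's appeal to the fixed-components claim.
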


\begin{proof}

($\ref{cond subsets} \Leftrightarrow \ref{cond subgroups}$). $\ref{cond subsets} \Rightarrow \ref{cond subgroups}$ is trivial. Let us see the other implication. We suppose $\beta D_Y \beta^{-1} = D_{Y'}$. We can find $v$ of minimal length such that $\beta D_Y = vD_Y$ (Proposition~\ref{Proposition 2.8}). We can write $\beta=vu=u'v$, $u\in D_Y$ and $u'\in D_{Y'}$, because 
$$\beta D_Y \beta^{-1}= v D_Y v^{-1}=D_{Y'}.$$
This implies, also by Proposition~\ref{Proposition 2.8}, that for all $s\in S(Y)$, $l(v) + 1 = l(vs) = l(vsv^{-1}v) = l(vsv^{-1}) + l(v)$. Hence $l(vsv^{-1}) = 1$ and so $vsv^{-1}\in S(X)$.
So $vsv^{-1}\in D_{Y'}\cap S(X) = S(Y')$, because $D_{Y'}$ is convex \cite[Lemma 2.5]{PS23}. 

It follows that $vS(Y)v^{-1} \subseteq S(Y')$. 
Suppose that $v y v^{-1} = y'^{r_1}$ for $y\in Y$ and $y'\in Y'$, so $v^{-1} y'^{r_1}v = y$. We also have that $v^{-1} y' v = \hat{y}^{r_2}$ for some $\hat{y}\in Y$, so $1=r_1r_2$ and $y=\hat{y}$.
Then we do a case-by-case analysis. If $o(y)=2$, then $y^{-1}=y$ and $r_1 = r_2 =1$.
If $o(y)>2$, then we have that $o(y')> 2$ and the exponent sum of $y$ in $v^{-1}y'^{r_1}v$ is either $0$ (if $y\neq y'$) or $r_1$ (if $y= y'$).
%as $y$ and $y'$ only participate in even order relations. 
But since $v^{-1}y'^{r_1}v = y$, this implies $r_1 =1$ and $y=y'$. This shows $v Y v^{-1} \subseteq Y'$. Since $v D_Y v^{-1}=D_{Y'}$ and no subset of $Y'$ is a generating set for $D_{Y'}$, the fact that the conjugation is an isomorphism implies that  $v Y v^{-1} = Y'$.
Note that we have shown that two different generators of order bigger than 2 cannot be conjugate.

\medskip
\noindent
($\ref{cond subsets} \Leftrightarrow \ref{cond graph}$).
$\ref{cond subsets} \Leftarrow \ref{cond graph}$ is trivial. We have to prove that if $\alpha Y\alpha^{-1} = Y'$ then $Y$ and $Y'$ are in the same connected component of $G$. If $Y=Y'$, this is trivial, so we will assume that $Y\neq Y'$. We start noticing that the generators of order different than 2 in a Dyer group commute with every other generator or do not share a relation, so $(Y)_{p\cup \infty}= (Y')_{p\cup \infty}$.

\medskip

\begin{claim} \label{claim:centraliser}
Let $x\in X$ with $o(x)>2$. Then the centraliser of $x$ in $D$ is $\operatorname{C}_D(x)= D_{\{x\}\cup\{x\}^\perp}= \langle x \rangle \times D_{\{x\}^\perp}$, where $\{x\}^\perp$ is the set of generators that commute with $x$. Moreover, the set of generators which are conjugates of $x$ is $\{x\}$.
\end{claim} 

\medskip

\begin{proof}[Proof of Claim~\ref{claim:centraliser}] The last statement was shown above. Assume $\alpha x \alpha^{-1}=x$. Without loss of generality, we may assume that $\alpha$ is of minimal length in the double coset $D_{\{x\}}\alpha D_{\{x\}}$. Choose a reduced word $w$ representing~$\alpha$, then this means that the words $w\cdot x$ and $x\cdot w$ are reduced. Then by Theorem~\ref{Theorem_transformations} we can go from one to another by a sequence of $M$-transformations.  Now if $o(x)>2$ then the only relations in which~$x$ is involved are commutators, thus~$x$ must commute with the generators in the support of $\overline{w}=\alpha$, proving the claim.
\end{proof}

\begin{claim} \label{claim:fixedcomponents}
    If~$Y_1$ defines a connected component in the Dyer graph of $D$ and contains $x\in Y_1$ with $o(x)>2$, then there cannot exist $\alpha\in D$, $Y_2\subseteq X$ such that $\alpha Y_1\alpha^{-1}=Y_2$ with $Y_1\neq Y_2$.
\end{claim}

\begin{proof}[Proof of Claim~\ref{claim:fixedcomponents}]
Assume the contrary. Let~$\Gamma_1$ and~$\Gamma_2$ be the Dyer graphs of~$Y_1$ and~$Y_2$ respectively, which are connected. There is $\alpha\in D$ such that $\alpha t_1\alpha^{-1}=t_2$ for $t_1\in Y_1$ and $t_2\in Y_2\setminus Y_1$. Let $m_1,m_2\in \{2,\infty\}$ be the length of the relations involving~$x$ and~$t_1$, and~$x$ with~$t_2$, respectively. We also know by Claim \ref{claim:centraliser} that $\alpha\in \operatorname{C}_D(x)$ and in particular, if  we conjugate $t_1 x=x t_1$ by $\alpha$ we obtain $t_2 x=x t_2$, so we have either $m_1=m_2=\infty$ or $m_1=m_2= 2 $. In the first case, $t_1$ and $t_2$ cannot belong to the support of $\alpha\in \operatorname{C}_D(x) = \langle x \rangle \times D_{\{x\}^\perp}$. By Theorem~\ref{Theorem_transformations}, we should be able to pass from any reduced word representing $\alpha t_1 \alpha^{-1}$ to $t_2$ using only $M$-transformations of type II.   Since the relations of our group involving more than one generator are homogeneous, $t_1,t_2\not\in \mathrm{Supp}(\alpha)$ implies that $\alpha$ cannot conjugate $t_1$ to $t_2$, having a contradiction. Therefore, we are in the second case. Since $Y_1$ and $Y_2$ define each a connected component, there must exist a path $(x=y_0,y_1,\dots, y_{k-1}, y_k=t_1)$ between $x$ and $t_1$ inside $\Gamma_1$ which is sent by conjugation by $\alpha$ to a path $(x=y'_0,y'_1,\dots,y'_{k-1},y'_k=t_2)$ between $x$ and $t_2$ inside $\Gamma_2$. We can assume that these paths are geodesic. This means precisely that for all $i\in\{0,\dots,k\}$, $y_i$ (resp. $y'_i$) does not commute with any $y_j$ (resp. $y'_j$) whenever $|i-j|=1$, and it does commute with every $y_j$ (resp. $y'_j$) whenever $|i-j|>1$.

%Let $n_1=n_2\neq 2$ be the lengths of the relation between $y_k$ and $t_1$, and $y'_{k}$ and $t_2$, respectively. Since they are conjugates, $o(y_k)=o(y'_{k})$. If $o(y_k)=o(y'_{k})>2$, then $n_1=n_2=\infty$ and by Claim 1 we have $y_k=y'_k$ and $\alpha\in C(y_k)$, being impossible to conjugate $t_1$ to $t_2$ as before. Thus, $o(y_k)=o(y'_{k})=2$.

%We may assume that all $y_i$ and $y'_i$ have order 2. Indeed, consider the largest $j$ such that $o(y_{j})>2$ (this set is non-empty as $o(x)>2)$, then by Claim~1 $y_{j}=y'_{j}$ and we can rename it as $x$ and consider only the $y_i$ with $i>j$.

%Consider now the smallest $l$ such that $y_l\neq y'_l$. We show by induction that for all $i=1,\dots,l-1$, $y_i$ does not belong to the support of $\alpha$. Indeed, since $\alpha\in C(x)$ and $y_1$ does not commute with $y_1$, then by Claim~1, $y_1\notin \Supp(\alpha)$. Now assume that $y_i\notin \Supp(\alpha)$. Since $\alpha$ fixes $y_i$, then by \cite[Theorem 2.2]{PS23}, $y_{i+1}$ cannot be in the support either, since we should be able to use only $M$-transformations.

%Therefore, $y_{l-1}$ is not in the support of $\alpha$ and it is fixed by $\alpha$, which implies that $y_l$ and $y'_l$ cannot be in the support on $\alpha$ and therefore they cannot be conjugate. This finishes the proof of the claim. 

Since both paths start at the same point $x$ and end at different points $t_1\neq t_2$, there is $i\in\{0,\dots,k-1\}$ which is the smallest index such that $y_i=y'_i$ and $y_{i+1}\neq y'_{i+1}$. Then, $\alpha\in \operatorname{C}_D(y_i)$ and we distinguish two cases.

If $o(y_i)>2$, then by Claim \ref{claim:centraliser} neither~$y_{i+1}$ or~$y'_{i+1}$ are in the support of~$\alpha$, because they do not commute with~$y_i$. But we have seen before that in this case $\alpha$ cannot conjugate~$y_{i+1}$ to~$y'_{i+1}$, which yields a contradiction.

If $o(y_i)=2$, then $i\geq1$ and we are going to show that for each $1\leq j\leq i$, $y_j\notin\Supp(\alpha)$ and all the letters in $\Supp(\alpha)$ must commute with $y_j$. For $i=1$, notice that $y_1$ does not commute with $x$, so $y_1\notin\Supp(\alpha)$ because we have seen that  $\alpha\in \operatorname{C}_D(x)=\langle x\rangle\times D_{\{x\}^\perp}$. But we also have $\alpha y_1=y_1\alpha$, so choosing a reduced word for a representative of minimal length in $D_{Y_1} \alpha D_{Y_1}$, we have that, like in the proof of Claim \ref{claim:centraliser}, by Theorem~\ref{Theorem_transformations}  all letters in $\Supp(\alpha)$ must commute with $y_1$. But then, since $y_2$ does not commute with $y_1$, we have $y_2\notin\Supp(\alpha)$, and since $\alpha$ also fixes $y_2$, with the same reduced word we deduce that all letters in $\Supp(\alpha)$ commute with $y_2$. We can again use the same argument until we reach $y_i$.

Now we have $\alpha y_{i+1}\alpha^{-1}= y'_{i+1}$, and neither $y_{i+1}$ nor $y'_{i+1}$ commute with $y_i$, therefore they do not belong to the support of~$\alpha$ and as before it is impossible that~$\alpha$ conjugates one to another, yielding a contradiction. This finishes the proof of the claim.
\end{proof}

\begin{figure}
    \centering
    \begin{tikzpicture}
    \draw(0,0)--(1.5,0);
    \draw(2.5,0)--(3,0)--(4,0.5)--(4.5,0.4);
    \draw(3,0)--(4,-0.5)--(4.5,-0.4);
    \draw(5.5,0.4)--(6,0.5);
    \draw(5.5,-0.4)--(6,-0.5);
\filldraw(0,0) circle[radius=0.06];
\filldraw(1,0) circle[radius=0.06];
\filldraw(3,0) circle[radius=0.06];
\filldraw(4,0.5) circle[radius=0.06];
\filldraw(4,-0.5) circle[radius=0.06];
\filldraw(6,0.5) circle[radius=0.06];
\filldraw(6,-0.5) circle[radius=0.06];
\node at (0,0)[below]{$x$};
\node at (1,0)[below]{$y_1$};
\node at (3,0)[below]{$y_i$};
\node at (2,0){$\dots$};
\node at (5,0){$\dots$};
\node at (4,0.5)[above right]{$y_{i+1}$};
\node at (4,-0.5)[below right]{$y'_{i+1}$};
\node at (6,0.5)[above right]{$t_1$};
\node at (6,-0.5)[below right]{$t_2$};
\node at (0.5,0)[above]{$\infty$};
    \end{tikzpicture}
    \caption{The paths from $x$ to $t_1$ and $t_2$. The first bifurcation point occurs at $y_i$.}
    \label{claim2fig}
\end{figure}

\medskip

Next, we prove that when we conjugate a set of generators to another, we can always do the same conjugation using an element of $D_2$. In particular, if we conjugate two different sets with all elements of order 2, then we can do the conjugation inside the corresponding Coxeter group. This will allow us to finish the proof using Krammer's result as explained afterwards.

\begin{claim} \label{claim:conjbycoxeter}
Let $Y,Y'\subseteq X$ and $\alpha\in D$ such that $\alpha Y\alpha^{-1} =Y'$, then there is $\alpha'\in D_2$ such that $\alpha' y \alpha'^{-1} = \alpha y \alpha^{-1}$ for every $y\in Y$.
\end{claim} 

\begin{proof}[Proof of Claim~\ref{claim:conjbycoxeter}]
For every $x\in Y$ we are going to pick some reduced $w_x$ representing~$\alpha$. We will prove that $\alpha x \alpha^{-1}= \overline{\hat{w}_x} x \left(\overline{\hat{w}_x}\right)^{-1}$ for every $x\in Y$. Since $\overline{\hat{w}_x}=\hat{\alpha}$ does not depend on $x$ (Remark~\ref{remark_hat}), then we can set $\alpha':=\hat{\alpha}$.

If $o(x)>2$, by Claim \ref{claim:centraliser} we know that $\alpha\in \operatorname{C}_D(x)=\langle x\rangle\times D_{\{x\}^\perp}$, so we can choose~$w_x$ to be a word where every letter commutes with $x$ or is a power of~$x$. Then, $\overline{\hat{w}_x}$ still gives an element in~$\operatorname{C}_D(x)$.

Now, for every $x\in Y$ of order 2, we have $\alpha x \alpha^{-1} =x'$ for some  $x'\in Y'$. Let $w$ be a word representing an element in $D_Y\alpha D_{Y'}$ which is of minimal length, so that $x \cdot w = w \cdot x'$ and both words of the equality are reduced. By Lemma~\ref{removegenerator} we can remove from~$w$ any power of a generator of order bigger than 2 to get a new word~$w_x$ and still have $x \overline{\hat{w}_x} =\overline{x\hat{w}_x}=\overline{\hat{w}_xx'}=\overline{\hat{w}_x} x'$, as we wanted. This finishes the claim.  
\end{proof}

Now we have all what we need to finish the proof. Since $\alpha Y \alpha^{-1}=Y'$, we have on the one hand that $(Y)_{p\cup\infty}=(Y')_{p\cup\infty}$. On the other hand, we have $\alpha Y_2\alpha^{-1}=Y'_2$, thus by Claim \ref{claim:conjbycoxeter} we can find $\alpha'\in D_2$ with $\alpha' Y_2\alpha'^{-1}=Y'_2$. By \cite[Corollary 3.1.7]{K94}, this means that~$Y_2$ and~$Y'_2$ are in the same connected component of~$G_2$, thus they are also in the same connected component of $G$ by Remark~\ref{G2inG}. Without loss of generality, let us assume that there is an edge $(Y''_2,t,t')$ in $G_2$ joining them. Recall that $t\neq t'$. Let us see that there is an edge in~$G$ between~$Y$ and~$Y'$. We verify the four conditions that we need to have an edge $(Y'',t,t')$ in $G$ between~$Y$ and~$Y'$.
\begin{enumerate}[label=(\alph*)]
\item $Y'':=Y''_2\cup (Y)_{p\cup\infty}\subseteq X$.
\item We want to prove that the connected component $\Gamma_0$ of $\Gamma_{Y''_2}$ containing~$t$ and~$t'$ is also a connected component of $\Gamma_{Y''}$. Suppose that it is not case. Then, we will have $x\in Y''$ of order bigger than 2 in the connected component of $\Gamma_{Y''}$ containing~$t$ and~$t'$. By Claim \ref{claim:fixedcomponents}, this means that $Y_2\sqcup\{x\}$ cannot be conjugate to a different subset of $X$, which contradicts that $t\neq t'$.
\end{enumerate}

And the (c) and (d) items are trivially satisfied too by \cite[Section 3]{K94}, hence $(Y'',t,t')$ is an edge in $G$ joining $Y$ and $Y'$. 
\end{proof}

\begin{corollary}
Let $(D,X)$ be a Dyer system with Dyer graph $(\Gamma,f,m)$, let \mbox{$u,v\in\ver\Gamma$} such that $u\neq v$. The following statements are equivalent:
\begin{enumerate}
    \item There exists $\alpha\in D$ such that $\alpha x_u \alpha^{-1}=x_v$.
    \item There is path in $\Gamma$ with sequence of vertices \mbox{$(u=v_1,v_2,\dots,v_{k-1},v_k=v)$} and sequence of edges $(e_1,\dots,e_{k-1})$ such that $f(v_i)=2$ for every \mbox{$i=1,\dots,k$} and $m(e_i)$ is finite and odd for every $i=1,\dots,k-1$.
\end{enumerate}
\end{corollary}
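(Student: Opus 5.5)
The plan is to reduce the corollary to the classical Coxeter case by means of the claims in the proof of Theorem~\ref{equivalentconditions}, and then to invoke the well-known description of conjugate simple reflections in a Coxeter group.

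For (2)$\Rightarrow$(1), take an edge $e_i=\{v_i,v_{i+1}\}$ with $f(v_i)=f(v_{i+1})=2$ and $m(e_i)=2k+1$ odd. In the dihedral group $D_{\{x_{v_i},x_{v_{i+1}}\}}$ the braid relation $[x_{v_i},x_{v_{i+1}}]_{2k+1}=[x_{v_{i+1}},x_{v_i}]_{2k+1}$, together with the fact that both generators are involutions, gives $\beta x_{v_i}\beta^{-1}=x_{v_{i+1}}$ with $\beta=[x_{v_{i+1}},x_{v_i}]_{k}$ (an alternating word of length $k$). This is a direct check. Composing these elements along the path conjugates $x_u$ to $x_v$.

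For (1)$\Rightarrow$(2), first note that by Claim~\ref{claim:centraliser}, if $o(x_u)>2$ then the only generator conjugate to $x_u$ is $x_u$ itself, which contradicts $u\neq v$. Hence $f(u)=2$, and then $f(v)=2$ as well, since conjugation preserves order. Next, apply Claim~\ref{claim:conjbycoxeter} with $Y=\{x_u\}$ and $Y'=\{x_v\}$. This yields $\alpha'\in D_2=D_{X_2}$ with $\alpha' x_u\alpha'^{-1}=x_v$. By Dyer's result quoted in the introduction, $D_{X_2}$ is the Coxeter group $W(\Gamma_{X_2})$. So $x_u$ and $x_v$ are conjugate simple reflections in a Coxeter system.

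The remaining step is the classical fact that two simple reflections $s,t$ of a Coxeter system are conjugate if and only if they are joined by a path whose edges all carry odd finite labels. I would cite this (e.g.\ Bourbaki, Ch.~IV, \S1, Ex.~3). Alternatively, it can be derived from Krammer's graph $G_2$ via \cite[Corollary 3.1.7]{K94}: an edge of $G_2$ between singletons $\{t\}$ and $\{t'\}$ comes from a twistable component of type $A_2$ or $I_2(p)$ with $p$ odd, that is, from an odd-labelled edge. Alternatively still, one can use the homomorphism $W\to\{\pm1\}^{\text{odd components}}$ that sends each generator to the indicator of its component in the graph of odd-labelled edges; it is well defined because every relation with even $m$ has even exponent sum in each generator. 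Since $\Gamma_{X_2}$ is a full subgraph of $\Gamma$, the resulting path lies in $\Gamma$ with all vertices of label $2$, which gives (2).

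I expect the only delicate point to be the Coxeter fact. I would use the sign-homomorphism argument: define $\phi\colon W(\Gamma_{X_2})\to\prod_C\mathbb{Z}/2$, where $C$ runs over the components of the odd-edge graph, sending $x_w$ to the basis vector of its component. Each braid relation either involves two vertices of the same component (odd $m$, so both sides map to the same element) or has even $m$ (both sides have equal exponent counts). Since the target is abelian, conjugate elements have equal images. Therefore $x_u$ and $x_v$ lie in the same component.
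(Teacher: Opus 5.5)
Your overall strategy works, but the explicit conjugator in (2)$\Rightarrow$(1) is wrong, so the ``direct check'' fails as written. Put $s=x_{v_i}$, $t=x_{v_{i+1}}$ and $m(e_i)=2k+1$.

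For $k=1$ your $\beta$ is $t$. But $tst=sts$ is the third reflection of the dihedral group of order $6$, not $t$. In fact no element of length $k$ works. Since $m$ is odd, the centraliser of $s$ in $D_{\{s,t\}}$ is $\{1,s\}$, so the only conjugators from $s$ to $t$ are $(st)^k$ and $(st)^ks=[s,t]_{2k+1}$, of lengths $2k$ and $2k+1$.

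The repair is one line. Using $(st)^{2k+1}=1$,
$(st)^k s (st)^{-k}=(st)^k s (ts)^k=(st)^{2k}s=(st)^{-1}s=t$.
Here $(st)^k=w_{\{s,t\}}s$ is exactly the elementary ribbon $r_{\{s\},t}$. The other conjugator, $[s,t]_{2k+1}$, is the longest element that realises the corresponding edge of $G$.

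With this fix your proof is correct, and it is lighter than the paper's. The paper gives no separate argument: the corollary is Theorem~\ref{equivalentconditions}, (1)$\Leftrightarrow$(3), specialised to $Y=\{x_u\}$ and $Y'=\{x_v\}$. Edges of $G$ preserve cardinality. An edge $(Y,t,t')$ joining two singletons forces $Y=\{t,t'\}$, with $t,t'$ of order $2$ and $\Gamma_Y$ connected of twistable rank-two type, i.e.\ $A_2$ or $I_2(p)$ with $p$ odd. So the paths in $G$ through singletons are exactly the paths in (2).

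For the hard direction the paper's route runs through the whole proof of the theorem. Its Coxeter step is Krammer's \cite[Corollary 3.1.7]{K94}, and your second alternative is essentially this. You perform the same reduction: Claim~\ref{claim:centraliser} forces order $2$, and Claim~\ref{claim:conjbycoxeter} moves the conjugator into $D_{X_2}$. You then replace Krammer's theorem by the parity homomorphism, which is elementary and fully checked in your write-up.

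You can even dispense with the two claims. The same map is a homomorphism on all of $D$ if each $x$ with $f(x)>2$ is sent to a generator of its own summand $\mathbb{Z}_{f(x)}$. This works because, by the Dyer-graph condition, odd labels occur only between vertices of label $2$, and generators of order $>2$ appear only in their power relation and in commutations. Equality of the images of $x_u$ and $x_v$ then forces $f(u)=f(v)=2$ and puts $u,v$ in the same odd component, without using Property~$\mathcal{D}$.

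What the paper's route buys is that the corollary comes for free from a classification of conjugacy of arbitrary subsets $Y,Y'$. That same classification feeds the ribbon description of conjugators in Section~\ref{sec:ribbonProperty}.
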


\section{Standardisation of parabolic subgroups}\label{standardisation}

In this section we study the following question.

\begin{question}
Let $(D,X)$ be a Dyer system. Let $g\in D$ and $Y,Z\subseteq X$ such that $gD_{Y}g^{-1}\subseteq D_{Z}$. Do there exist $h\in D_Z$ and $Y'\subseteq Z$ such that $gD_{Y}g^{-1}=hD_{Y'}h^{-1}$?
\end{question}

We call this property \emph{standardisation property} and it is already known to be true both for Artin groups \citep{BlufPar} and for Coxeter groups. Although it has been of general knowledge for a long time, we were not able to find a proof in the literature for Coxeter groups. We note that our proof for Dyer groups trivially includes the case of Coxeter groups, although a result solely for Coxeter groups can be proved with the same structure using \cite[Lemma~2]{Solomon} in place of \cite[Lemma~6.1]{PS23}.

\begin{theorem}
    \label{thm:parabolicInsideParabolicDyer}
Let $(D,X)$ be a Dyer system. Let $g\in D$ and $Y,Z\subseteq X$ such that $gD_{Y}g^{-1}\subseteq D_{Z}$. Then there exist $h\in D_Z$ and $Y'\subseteq Z$ such that $gD_{Y}g^{-1}=hD_{Y'}h^{-1}$.
\end{theorem}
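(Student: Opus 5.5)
We reduce to the case where $g$ is a minimal coset representative and then use a Solomon-type lemma to push $g$ into $D_Z$. The paper mentions \cite[Lemma~6.1]{PS23} as the Dyer analogue of \cite[Lemma~2]{Solomon}. We take it to say the following: if $u\in D$ is the unique element of minimal length in the double coset $D_Z u D_Y$, then
\[
D_Z\cap uD_Yu^{-1}=D_{Z\cap uYu^{-1}}\quad\text{with}\quad Z\cap uYu^{-1}\subseteq Z,
\]
and more precisely $uD_{Y''}u^{-1}=D_{Z\cap uYu^{-1}}$ for $Y''=Y\cap u^{-1}Zu$. If the cited lemma is only stated for one-sided cosets, I would first establish this double-coset version. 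The route is as in the proof of Theorem~\ref{equivalentconditions}: use Proposition~\ref{Proposition 2.8} twice together with Theorem~\ref{Theorem_transformations}.

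\textbf{Step 1 (decomposition of $g$).} Write $g=z\,u\,y$ with $z\in D_Z$, $y\in D_Y$, and $u$ of minimal length in $D_ZgD_Y$. Existence and uniqueness of such $u$ come from applying Proposition~\ref{Proposition 2.8} first on the right and then on the left. Since $yD_Yy^{-1}=D_Y$, we get
\[
gD_Yg^{-1}=z\,(uD_Yu^{-1})\,z^{-1}.
\]
The hypothesis $gD_Yg^{-1}\subseteq D_Z$ then gives $uD_Yu^{-1}\subseteq z^{-1}D_Zz=D_Z$. So it suffices to show that $uD_Yu^{-1}$ is a standard parabolic subgroup $D_{Y'}$ with $Y'\subseteq Z$; we can then take $h=z$.

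\textbf{Step 2 (minimality forces standard generators).} Since $uD_Yu^{-1}\subseteq D_Z$, we have $uD_Yu^{-1}=D_Z\cap uD_Yu^{-1}$. By the Solomon-type lemma this equals $D_{Z\cap uYu^{-1}}$, which is standard with $Y'=Z\cap uYu^{-1}\subseteq Z$.

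To avoid relying on the exact form of the lemma, I would also prove directly that $uxu^{-1}\in S(Z)$ for every $x\in S(Y)$. The element $uxu^{-1}$ lies in $D_Z$, so $ux=z'u$ with $z'\in D_Z$. By minimality of $u$ in the double coset and the length additivity of Proposition~\ref{Proposition 2.8}, we have $l(ux)=l(u)+1$ and $l(z'u)=l(z')+l(u)$, which forces $l(z')=1$. This is the same computation as in the proof of Theorem~\ref{equivalentconditions}.

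\textbf{Step 3 (conclusion).} Step 2 shows $u\,S(Y)\,u^{-1}\subseteq S(Z)$. The exponent argument from Theorem~\ref{equivalentconditions} then gives $uYu^{-1}=:Y'\subseteq Z$: a generator of order $2$ goes to a generator, and a generator of order greater than $2$ is fixed by Claim~\ref{claim:centraliser}. Conjugation is an isomorphism, so $uD_Yu^{-1}=D_{Y'}$ and therefore $gD_Yg^{-1}=zD_{Y'}z^{-1}$ with $z\in D_Z$, as required.

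\textbf{Main obstacle.} The hard step is the length-additivity claim in Step 2: that a double-coset-minimal $u$ satisfies $l(z'u)=l(z')+l(u)$ for $z'\in D_Z$ while also $l(ux)=l(u)+1$. Proposition~\ref{Proposition 2.8} gives additivity only for one-sided cosets. To handle double cosets I would take $u$ minimal in $D_Zu$ and in $uD_Y$ simultaneously, which holds for the double-coset minimum. The key equality is then $l(z'u)=l(z')+l(u)=l(ux)$. With $l(ux)=l(u)+1$, coming from one-sided minimality in $uD_Y$, this yields $l(z')=1$.
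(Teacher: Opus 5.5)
Your proof is correct and is essentially the paper's. You write $g=zuy$ with $u$ of minimal length in $D_ZgD_Y$, deduce $uD_Yu^{-1}\subseteq D_Z$, and apply \cite[Lemma~6.1]{PS23} to get $uD_Yu^{-1}=D_{Z\cap uYu^{-1}}$.

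Your self-contained backup via length additivity also works, with one small fix. Because you only have the inclusion $uD_Yu^{-1}\subseteq D_Z$, you cannot use the reverse conjugation from the proof of Theorem~\ref{equivalentconditions}. To rule out $uxu^{-1}=z^r$ with $r\neq 1$ (for instance $o(x)=2$ and $z^r$ a proper power of order $2$), use instead the exponent-sum homomorphism $D\to\mathbb{Z}_{f(z)}$ sending $z\mapsto 1$ and all other generators to $0$. This map is well defined whenever $o(z)>2$.
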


\begin{proof}
We write $g\in G$ in the form $g=atb$ where $a\in D_Z, b\in D_{Y}$ and $t$ is of minimal syllabic length in the double coset $D_ZtD_Y$ (we note that for the case of Coxeter groups, this decomposition is unique \cite[Chap. 4, Exercise 1.3]{Bourbaki}). We have
\[
gD_{Y}g^{-1}=atbD_{Y} b^{-1}t^{-1}a^{-1}=atD_{Y} t^{-1}a^{-1} \subseteq D_Z
\]
and $a\in D_Z$, thus $tD_{Y} t^{-1}\subseteq D_Z$. Now, by \cite[Lemma 6.1]{PS23} we have
\[
tD_{Y} t^{-1}=tD_{Y} t^{-1}\cap D_Z = D_{Y'}
\]
where $Y'=t Y t^{-1}\cap Z$. Therefore we have $gD_{Y}g^{-1}=aD_{Y'}a^{-1}$ with $Y'\subseteq Z$ and $a\in D_Z$, as we wanted.
\end{proof}

We shall now use Theorem~\ref{thm:parabolicInsideParabolicDyer} to see that the intersection of an arbitrary family of parabolic subgroups is again a parabolic subgroup. Note that the result is already known when the Dyer system is of finite type~\cite[Theorem 2.10]{PS23}, although it is true in general that the intersection of \emph{finitely many} parabolic subgroups is a parabolic subgroup, with no restrictions on the type of the Dyer system~\cite[Lemma 6.2]{PS23}.

\begin{theorem}
    \label{thm:intersectionParabSubgrp}
    Let $(D,X)$ be a Dyer system and let $\{P_i\}_{i\in I}$ be a family of parabolic subgroups of $D$. Then $\bigcap_{i\in I}P_i$ is a parabolic subgroup of $D$.
\end{theorem}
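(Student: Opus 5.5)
The plan is to reduce the arbitrary intersection to a finite one by a descending chain argument, using that finite intersections are parabolic \cite[Lemma 6.2]{PS23}. For each finite subset $J\subseteq I$, set $P_J=\bigcap_{j\in J}P_j$; by \cite[Lemma 6.2]{PS23} each $P_J$ is a parabolic subgroup, say $P_J=g_JD_{Y_J}g_J^{-1}$. I will choose a finite $J_0$ such that $P_{J_0}$ is minimal among all the $P_J$, and then show $\bigcap_{i\in I}P_i=P_{J_0}$. The latter is immediate once minimality is available: for any $i\in I$, $P_{J_0\cup\{i\}}\subseteq P_{J_0}$ is again one of the $P_J$, so equality holds and $P_{J_0}\subseteq P_i$.

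The key step is a chain condition: there is no infinite strictly decreasing chain of parabolic subgroups. I will attach to a parabolic subgroup $P=gD_Yg^{-1}$ the invariant $|Y|$ and show that it strictly decreases under proper inclusion. Suppose $Q=hD_{Y'}h^{-1}\subsetneq P=gD_Yg^{-1}$. Conjugating by $g^{-1}$ gives $g^{-1}hD_{Y'}h^{-1}g\subsetneq D_Y$. Theorem~\ref{thm:parabolicInsideParabolicDyer} then lets me rewrite this as $kD_{Y''}k^{-1}$ with $k\in D_Y$ and $Y''\subseteq Y$. Conjugating by $k^{-1}\in D_Y$, I get $D_{Y''}\subsetneq D_Y$, so $Y''\subsetneq Y$ and $|Y''|<|Y|$.

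The remaining point is that $|Y''|=|Y'|$, so that the invariant is well defined. The subgroups $D_{Y''}$ and $D_{Y'}$ are conjugate. By Theorem~\ref{equivalentconditions} there is then $\alpha$ with $\alpha Y'\alpha^{-1}=Y''$, hence $|Y'|=|Y''|$. The same argument, applied to two different presentations $gD_Yg^{-1}=g'D_{Y_1}g'^{-1}$ of one parabolic subgroup, shows that $|Y|$ depends only on $P$; call it the rank of $P$. Hence proper inclusion strictly lowers the rank, and ranks are bounded by $|X|$.

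With this, pick a finite $J_0$ minimising the rank of $P_{J_0}$. For any finite $J\supseteq J_0$ we have $P_J\subseteq P_{J_0}$ and the rank cannot drop further, so the inclusion is an equality. Applying this with $J=J_0\cup\{i\}$ gives the conclusion above. The main obstacle is the rank argument: the strict inclusion has to pass correctly through the standardisation theorem, and the rank has to be well defined. Both are handled by Theorem~\ref{thm:parabolicInsideParabolicDyer} combined with the subset-conjugacy statement of Theorem~\ref{equivalentconditions}. An empty index family can be treated separately, with intersection $D=D_X$.
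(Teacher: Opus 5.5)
Your proposal is correct and follows essentially the same route as the paper: you take a finite intersection $P_{J_0}$ of minimal rank, which is parabolic by \cite[Lemma 6.2]{PS23}, and use Theorem~\ref{thm:parabolicInsideParabolicDyer} to show that intersecting it with any further $P_i$ cannot shrink it. The only difference is that you make the rank well defined via Theorem~\ref{equivalentconditions}. The paper avoids that step by minimising $\lvert Y_0\rvert$ over all representations $g_0D_{Y_0}g_0^{-1}$ of finite intersections, so that $Z\subseteq Y_0$ together with $\lvert Z\rvert\ge\lvert Y_0\rvert$ directly forces $Z=Y_0$.
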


\begin{proof}
    Our proof follows the lines of the proof the result for Dyer systems of finite type~\cite[Theorem 2.10]{PS23}. If $I$ is empty, then $\bigcap_{i\in I}P_i=D$, which is a parabolic subgroup itself. Hence we may assume that $I$ in not empty. Let $\mathcal F$ be the set of all possible intersections of finitely many of the subgroups $P_i$'s. Note that, because the intersection of finitely many parabolic subgroups is again a parabolic subgroup~\cite[Lemma 6.2]{PS23}, all elements of $\mathcal F$ are parabolic subgroups of $D$. Let $P_0=g_0 D_{Y_0} g_0^{-1}$ be an element of $\mathcal F$ with $\lvert Y_0\rvert$ minimal. We claim that $P_0=\bigcap_{i\in I}P_i$. To see this, we shall show that, for every $P\in\mathcal F$, $P_0\subseteq P$. Therefore, let $P=gD_{Y}g^{-1}\in\mathcal F$ and let us set $P'=P\cap P_0$, which is a parabolic subgroup of $D$. On the one side, by Theorem~\ref{thm:parabolicInsideParabolicDyer}, there exist $Z\subseteq Y_0$ and $h\in D_{Y_0}$ such that $P'=(g_0h)D_{Z}(g_0h)^{-1}$. In particular, $\lvert Z\rvert \le \lvert Y_0\rvert$. On the other side, $P'\in\mathcal F$ and hence $\lvert Z\rvert\ge\lvert Y_0\rvert$, by minimality of~$P_0$. It follows that $Z=Y_0$ and hence $P'=(g_0h) D_{Y_0} (g_0h)^{-1} =g_0 D_{Y_0} g_0^{-1} = P_0$. Hence $P_0=P'=P\cap P_0\subseteq P$, as we claimed.
\end{proof}

\begin{corollary}
    \label{cor:parabClosure}
    Let $(D,X)$ be a Dyer system and let $A\subseteq D$. There exists an inclusion-wise smallest parabolic subgroup of $D$ containing $A$.
\end{corollary}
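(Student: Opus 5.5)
The plan is to take the intersection of all parabolic subgroups of $D$ that contain $A$, and then use Theorem~\ref{thm:intersectionParabSubgrp} to see that this intersection is itself parabolic.

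Let $\{P_i\}_{i\in I}$ denote the family of all parabolic subgroups of $D$ containing $A$. This family is nonempty, because $D=D_X$ is a standard parabolic subgroup and contains $A$. (Even an empty family would cause no trouble, since the proof of Theorem~\ref{thm:intersectionParabSubgrp} handles $I=\emptyset$.) The family may be infinite, and a priori need not even be countable. This is harmless, because Theorem~\ref{thm:intersectionParabSubgrp} places no restriction on the index set.

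Set $P_A:=\bigcap_{i\in I}P_i$.
\begin{itemize}
\item By Theorem~\ref{thm:intersectionParabSubgrp}, $P_A$ is a parabolic subgroup of $D$.
\item Since every $P_i$ contains $A$, so does $P_A$.
\item If $Q$ is any parabolic subgroup with $A\subseteq Q$, then $Q=P_j$ for some $j\in I$, and hence $P_A\subseteq Q$.
\end{itemize}
So $P_A$ is the inclusion-wise smallest parabolic subgroup containing $A$. It is unique, since any two smallest such subgroups contain each other.

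Taking $A=\{g\}$ gives the parabolic closure of an element, as stated in the introduction. There is no real obstacle in this argument: all the substance lies in Theorem~\ref{thm:intersectionParabSubgrp}, and through it in the standardisation result, Theorem~\ref{thm:parabolicInsideParabolicDyer}.
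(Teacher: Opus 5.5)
Your proposal is correct and is the same argument as the paper's. The paper also intersects the family of all parabolic subgroups containing $A$ and applies Theorem~\ref{thm:intersectionParabSubgrp}, with no restriction on the size of the index set.
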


\begin{proof}
    The claim follows as an application of Theorem~\ref{thm:intersectionParabSubgrp}, using the family $$\{P\subseteq D\mid \text{$P$ is a parabolic subgroup and }A\subseteq P\}.$$
\end{proof}

\begin{definition}[Parabolic closure]
    Let $(D,X)$ be a Dyer system; the \emph{parabolic closure} $\pc A$ of a subset $A\subseteq D$ is the inclusion-wise smallest parabolic subgroup of $D$ that contains $A$, as provided by Corollary~\ref{cor:parabClosure}.
\end{definition}

If $A=\{g\}$, then we shall write $\pc g$ instead of $\pc{\{g\}}$ for the sake of light notation, and call it the parabolic closure of the element $g$. Likewise, for a word $w$ in the alphabet~$X$, we denote by $\pc w$ the parabolic closure of the element $\overline w$.

\begin{lemma}
    \label{lem:parabClosureConj}
    Let $(D,X)$ be a Dyer system and let $h,g\in D$. Then $\pc{ghg^{-1}}=g\pc{h}g^{-1}$.
\end{lemma}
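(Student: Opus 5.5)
The plan is to deduce the equality from the fact that conjugation by $g$ is a bijection on the set of parabolic subgroups of $D$ that preserves inclusion. The only property of $\pc{\cdot}$ we use is the one from Corollary~\ref{cor:parabClosure}: it is the inclusion-wise smallest parabolic subgroup containing the given set.

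First I would observe that if $P = kD_Yk^{-1}$ is parabolic, then $gPg^{-1} = (gk)D_Y(gk)^{-1}$ is again parabolic, directly from Definition~\ref{def:parabolic}. Likewise $g^{-1}Pg$ is parabolic. So $P\mapsto gPg^{-1}$ is a bijection of the set of parabolic subgroups onto itself, with inverse $P\mapsto g^{-1}Pg$, and both maps preserve inclusion.

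Next I would prove the two inclusions.
\begin{enumerate}
    \item Since $h\in\pc h$, we get $ghg^{-1}\in g\pc h g^{-1}$. The right-hand side is parabolic by the first step, so minimality of $\pc{ghg^{-1}}$ gives $\pc{ghg^{-1}}\subseteq g\pc h g^{-1}$.
    \item Apply the same argument to the element $ghg^{-1}$ and the conjugator $g^{-1}$. This gives $\pc h=\pc{g^{-1}(ghg^{-1})g}\subseteq g^{-1}\pc{ghg^{-1}}g$. Conjugating back by $g$ yields $g\pc h g^{-1}\subseteq\pc{ghg^{-1}}$.
\end{enumerate}
Combining the two inclusions gives the equality.

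There is no real obstacle here. The only point needing care is that $g\pc h g^{-1}$ is parabolic in the sense of Definition~\ref{def:parabolic}, i.e.\ a conjugate of a \emph{standard} parabolic subgroup for the fixed generating set $X$. This holds because composing conjugations yields a single conjugation. Existence of $\pc{\cdot}$ is already guaranteed by Corollary~\ref{cor:parabClosure}, which rests on Theorem~\ref{thm:intersectionParabSubgrp}.
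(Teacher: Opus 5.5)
Your proposal is correct and follows essentially the same argument as the paper. Both proofs use only that conjugates of parabolic subgroups are parabolic together with the minimality of the parabolic closure, proving one inclusion directly and obtaining the other by symmetry.
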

\begin{proof}
Let $P\subseteq D$ be a parabolic subgroup containing $ghg^{-1}$. Then $h\in g^{-1}Pg$, which is a parabolic subgroup, thus $\pc{h} \subseteq g^{-1}Pg$ and $g\pc{h}g^{-1} \subseteq P$. In particular, $g\pc{h}g^{-1} \subseteq \pc{ghg^{-1}}$. The other inclusion is symmetric.
\end{proof}

\section{Conjugating sets of parabolic subgroups}
\label{sec:ribbonProperty}

We now seek to answer the following question.

\begin{question}
Let $(D,X)$ be a Dyer system, let $Y,Y'\subseteq X$ and assume that the standard parabolic subgroups $D_Y,D_{Y'}$ are conjugate. Then which is the set of elements that conjugate $D_Y$ into $D_{Y'}$?
\end{question}
We will denote this set as
\[
\Conj(D_Y,D_{Y'}):=\{g\in D \mid gD_Y g^{-1}=D_{Y'} \}.
\]
In particular, when $Y$ and $Y'$ coincide, we will also describe the normaliser of a parabolic subgroup $D_Y$ in $D$, which we denote by
\[
\operatorname{N}_D(D_Y):=\{g\in D \mid gD_Y g^{-1}=D_{Y} \}.
\]
We first need to introduce some definitions. The following definition is a straight-forward generalization to Dyer groups of the definition for ribbons in Coxeter and Artin groups.

\begin{definition}
Let $(D,X)$ be a Dyer system with Dyer graph $\Gamma$, and let $Y\subseteq X$ such that~$D_Y$ is a standard parabolic subgroup of spherical type. Let $x\in Y$. Then $Y':=Y\setminus\{ x\}$ also has spherical type. We distinguish two cases. If $o(x)=2$, we consider the elements of maximal length in the Coxeter groups $D_{Y_2}$ and $D_{Y'_2}$. Let us denote them $w_{Y}$ and $w_{Y'}$ respectively. Then we have
\[
w_Y^{-1}w_{Y'} D_{Y'} w_{Y'}^{-1} w_Y= w_Y^{-1} D_{Y'} w_Y = D_{Z}
\]
for some subset $Z\subseteq Y$, since each longest element $w$ permutes the generators by conjugation. Notice that $Z_p\cup Z_\infty =Y'_p\cup Y'_\infty=Y_p\cup Y_\infty$ as they are point-wise fixed by the conjugation above. We call the element $w_{Y}^{-1}w_{Y'}$ and its inverse an \emph{elementary $(Y',Z)-$ribbon} and an \emph{elementary $(Z,Y')-$ribbon}, respectively.

If $o(x)>2$, then $x$ is in a cyclic component of $D_Y$ and we have that $xD_{Y'}x^{-1}=D_{Y'}$. In this case, we call the element $x$ and its inverse an \emph{elementary $(Y',Y')-$ribbon}.

For a general parabolic subgroup $D_Y$ not necessarily of spherical type, if there is $x\in X\setminus Y$ such that the connected component $\Gamma_U$ of $\Gamma_{Y\cup\{x\}}$ that contains $x$ is of spherical type, we call the following element and its inverse \emph{elementary ribbons}:
\[
r_{Y,x}:=\left\{\begin{array}{ll}
w_U^{-1} w_{U\setminus\{x\}}, & \text{if } o(x)=2, \\
    x, & \text{if } o(x)>2 \text{ (note that in this case $U=\{x\}$).}
\end{array}\right.
\]

We say that an element $r=r_1\dots r_q$ is a \emph{$(Y,Y')-$ribbon} if there is a sequence of sets of generators $Y=Y^1,\dots,Y^{q+1}=Y'$ such that each $r_i$ is an elementary $(Y^i,Y^{i+1})-$ribbon. The set of all $(Y,Y')-$ribbons is denoted by $\Ribb(Y,Y')$.
\end{definition}

\begin{remark}
    \label{rem:slideRibbon}
    It is clear from the definition of ribbon that, for all $Y,Y'\subseteq X$ and all $r\in\Ribb(Y,Y')$, $rD_Y=D_{Y'}r$. In particular, $\Ribb(Y,Y')\cdot D_Y=D_{Y'}\cdot \Ribb(Y,Y')$: we will often use this equality in the following of the section without commenting it.
\end{remark}

\begin{remark}\label{perpinribb}
For every $Y\subseteq X$, we have $Y^\perp\subseteq\Ribb(Y,Y)$. Indeed, take $x\in Y^{\perp}$, then $r_{Y,x}=x$.
In particular, if $\Gamma_Y$ is connected and there is $x\in Y$ with $o(x)>2$, then $\Ribb(Y,Y)=Y^\perp$. Also note that $\Ribb(Y,Y)\cap D_{p\cup\infty}=Y^\perp \cap D_{p\cup\infty}$.
\end{remark}

\begin{remark}
Notice that each edge in the graph $G$ that we defined in the previous section actually encodes a non trivial conjugation by an elementary ribbon. Indeed, let $(Y,t,t')$ be an edge of $G$, and let $Z=Y\setminus\{t'\}$ and $Z'=Y\setminus\{t\}$ be the two vertices that the edge joins. Then, we have that
\[
r_{Y,t'} D_{Z} r_{Y,t'}^{-1} =D_{Z'}.
\]
\end{remark}

\medskip

We should note that in \cite[Section 3.1]{K94}, a geometric approach, instead of our algebraic approach, is taken to study the conjugacy problem in Coxeter groups. We briefly describe this process for reference. To keep things as in the original, this is the only paragraph in this paper in which we write $g^{-1}\cdot g$ for conjugation instead of $g\cdot g^{-1}$.

Let $W$ be a Coxeter group with set of standard generators~$X$. The construction of the graph~$G$ from the previous section is given by the action of~$W$ on the simple roots $\Pi_X=\{\alpha_x\mid x\in X\}$ of its root system. In particular, the vertices are the subsets $Y\subseteq X$, while an edge joins two subsets $Y,Y'\subseteq X$ if there is $x\in X\setminus Y$ and an elementary ribbon~$r_{Y,x}$ such that $r_{Y,x}^{-1}\Pi_Y=\Pi_{Y'}$. It is then shown \cite[Theorem 3.1.3]{K94} that if there is $g\in W$ with $g^{-1}\Pi_Y=\Pi_{Y'}$ for some $Y,Y'\subseteq X$, then $Y$ and $Y'$ are in the same connected component of $G$, and $g$ can be written as a product of the elementary ribbons corresponding to the path in the graph joining $Y$ to $Y'$. Finally, it is proved \cite[Corollary 3.1.7]{K94} that if $g\in W$ is of minimal length in the coset $gW_{Y'}$ and $g^{-1}W_Yg=W_{Y'}$, then $g^{-1}\Pi_Y=\Pi_{Y'}$ and therefore $Y$ and $Y'$ are in the same connected component of $G$. 

\medskip

We will now state some definitions and results concerning Artin monoids, that we will use in the proof of our theorem.

\begin{definition}
	Let $(\Gamma, f, m)$ be a Dyer graph. The \emph{Artin monoid} $A^+ = A^+(\Gamma, m)$ associated with the Dyer graph $(\Gamma, f, m)$ is the monoid given by the following presentation
\[
		A^+ = \langle \ver{\Gamma} \mid \underbrace{uvu\cdots}_{m(u,v)\text{ terms}} = \underbrace{vuv\cdots}_{m(u,v)\text{ terms}} \text{ for all } \ \{u, v\} \text{ with } m(u,v)\neq\infty\rangle.
\]
\end{definition}

By \cite[Proposition 2.3]{brieskornsaito}, we know that Artin monoids satisfy right and left cancellation. Thanks to these, there are two partial orderings on the Artin monoid that we now introduce.

\begin{definition}
Let $A^+$ be an Artin monoid, and let $a,b\in A^+$. We define a partial order on $A^+$ by $a\preceq b$ if there exists $c\in A^+$ such that $b=ac$. In this case we say that $a$ is a \emph{left divisor} or a \emph{prefix} of $b$, and $b$ is a \emph{left multiple} of $a$. We call this order the \emph{prefix order} on $A^+$.
Analogously,  we define a partial order on $A^+$ by $a\succeq b$ if there exists $c\in A^+$ such that $b=ca$. In this case we say that $a$ is a \emph{right divisor} or a \emph{suffix} of $b$, and $b$ is a \emph{right multiple} of $a$. We call this order the \emph{suffix order} on $A^+$.
\end{definition}

These partial orderings extend from the Artin monoid to the Artin group defined with the same presentation. In the case of finite-type Artin groups, it provides the group with a \emph{Garside structure} ---we refer to \citep{brieskornsaito} for more details.

\medskip

Our last result describes the elements conjugating a standard parabolic subgroup into another (possibly equal) standard parabolic subgroup. This result is known as the \emph{ribbon property}.

\begin{theorem}[Ribbon property for Dyer groups]
    \label{thm:ribbonConj}
    Let $(D,X)$ be a Dyer system, and let $Y,Y'\subseteq X$ such that $D_Y$ and $D_{Y'}$ are conjugate. Then,
    \[
    \Conj(D_Y,D_{Y'})=\Ribb(Y,Y') \cdot D_Y.
    \]
\end{theorem}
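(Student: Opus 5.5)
The inclusion $\Ribb(Y,Y')\cdot D_Y\subseteq\Conj(D_Y,D_{Y'})$ is immediate from Remark~\ref{rem:slideRibbon}: if $r\in\Ribb(Y,Y')$ and $u\in D_Y$, then $ruD_Yu^{-1}r^{-1}=rD_Yr^{-1}=D_{Y'}$. For the reverse inclusion, take $g\in\Conj(D_Y,D_{Y'})$ and reduce, as in the proof of Theorem~\ref{equivalentconditions}, to the unique element $v$ of minimal length in $gD_Y$ (Proposition~\ref{Proposition 2.8}). That proof shows $vYv^{-1}=Y'$. It then suffices to prove that every $v$ which is minimal in $vD_Y$ and satisfies $vYv^{-1}=Y'$ is a $(Y,Y')$-ribbon.

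We argue by induction on $l(v)$; the case $l(v)=0$ is trivial. For $l(v)>0$, we split $v$ into its order-2 part and its higher-order part. By Claim~\ref{claim:conjbycoxeter} and Remark~\ref{remark_hat}, $\hat v\in D_2$ conjugates $Y_2$ onto $Y'_2$ exactly as $v$ does. We also have $Y_{p\cup\infty}=Y'_{p\cup\infty}$, and Claim~\ref{claim:centraliser} gives $v\in C_D(x)$ for each such $x$. The natural strategy is then to peel off one elementary ribbon from the right. Choose a reduced word for $v$ and let $s^{\alpha}$ be its last syllable; minimality in $vD_Y$ forces $s\notin Y$.

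If $o(s)>2$, we show $s\in Y^\perp$. Writing $v=v's^{\alpha}$, we have $s^{\alpha}Ys^{-\alpha}\subseteq vD_Y\ldots$. More concretely, we use Property~$\mathcal D$ on the reduced words $v\cdot y$ and $y'\cdot v$ for $y\in Y$, with $y'=vyv^{-1}$, exactly as in the proof of Claim~\ref{claim:centraliser}. A letter $s$ of order greater than 2 sits only in commutation and $\infty$ relations, so to move $y$ across the final $s$ the two letters must commute. Thus $s^{\alpha}=r_{Y,s}^{\alpha}$ is a product of elementary $(Y,Y)$-ribbons, and $v'=vs^{-\alpha}$ is shorter, still conjugates $Y$ to $Y'$, and remains minimal in its coset. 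Induction then applies.

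If $o(s)=2$, we follow Paris/Godelle's argument for Coxeter groups. Let $U$ be the connected component of $\Gamma_{Y\cup\{s\}}$ containing $s$. We want to show that $\Gamma_U$ is of spherical type and that $r_{Y,s}$ is a suffix of $v$ with $l(v)=l(vr_{Y,s}^{-1})+l(r_{Y,s})$. The expected route is to take, for each $y\in U\setminus\{s\}$, the reduced words $v\cdot y=y'\cdot v$ and track via Theorem~\ref{Theorem_transformations} how $y$ must pass through the tail of $v$. This shows that the tail of $v$ lies in $D_U$ and is a common right multiple of all $y\in U\setminus\{s\}$ together with $s$, with each letter of $U$ as a right divisor (in the Artin-monoid suffix order, which is where the monoid $A^+$ enters). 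Such a common multiple exists only when $D_U$ is finite, and then the minimal one is $w_U$. Since $U$ contains only order-2 generators (the component condition, as in Claim~\ref{claim:fixedcomponents}), this should give $v=v''w_U^{-1}w_{U\setminus\{s\}}$ with lengths adding. Hence $v''$ conjugates $Z=r_{Y,s}Yr_{Y,s}^{-1}$ to $Y'$, is minimal in $v''D_Z$, and is shorter, so induction finishes.

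The main obstacle is this last step: proving that the last letter $s$ forces the whole elementary ribbon $r_{Y,s}$ to be a suffix of $v$ with additive length, and in particular that $U$ is spherical. If a direct word argument becomes messy, the first fallback is to pass to $\hat v\in D_2$ via Claim~\ref{claim:conjbycoxeter}. There one can invoke Krammer's result \cite[Theorem 3.1.3, Corollary 3.1.7]{K94} that in the Coxeter group $D_2$ the minimal conjugator $\hat v$ is a product of elementary ribbons along a path in $G_2$. The edges of that path are edges of $G$ (Remark~\ref{G2inG}, with the component condition checked as in the proof of Theorem~\ref{equivalentconditions}(b)), so those Coxeter ribbons are Dyer ribbons. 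Finally, $v\hat v^{-1}$ lies in the centraliser of $Y$ and is built from higher-order letters commuting with $Y$, so it is a product of elements of $Y^\perp$, which are ribbons by Remark~\ref{perpinribb}.
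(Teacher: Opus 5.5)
Your reduction to the minimal element $v$ of $gD_Y$ (which satisfies $vYv^{-1}=Y'$), the induction on $l(v)$ by peeling an elementary ribbon off the right, and the case $o(s)>2$ are fine. For that case, make your sentence rigorous as follows. For $o(y)=2$ (the case $o(y)>2$ is Claim~\ref{claim:centraliser}), follow the last $s$-syllable through the type~II moves from $v\cdot y$ to $y'\cdot v$. It only ever swaps with letters of $\{s\}^\perp$, so the set of generators not commuting with $s$ to its right is invariant. That set contains $y$ in the first word unless $y\in\{s\}^\perp$, and it is empty in the second word.

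The gap is the case $o(s)=2$. This is the heart of the theorem, and you only describe it as an ``expected route''. As written, the mechanism is wrong: no suffix of $v$ can have a letter $y\in Y$ as a right divisor, since $v$ is minimal in $vD_Y$. What is needed is this. For $V\subseteq U\setminus\{s\}$ with $D_V$ finite, the element $vw_V$ has both $w_V$ and $s$ as right divisors. The second holds because $vw_V=(vw_Vv^{-1})\,v$ with $vw_Vv^{-1}\in D_{Y'}$, and lengths add by Proposition~\ref{Proposition 2.8} applied to $D_{Y'}v=vD_Y$. You then need an lcm argument in an Artin monoid to which the type~II moves on syllabic words lift. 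Separately, you need an argument that $U$ is spherical when $D_{U\setminus\{s\}}$ is infinite, where $w_{U\setminus\{s\}}$ does not exist. This is what the paper supplies in its Case~1 (via $w_Y$, resp.\ the auxiliary element $w_s=w'w_Zs$). Higher-order generators of $Y$ are treated separately in Cases~2 and~3.

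Your fallback does not close the gap, because its last step is false. Take $X=\{a,b,x\}$ with $o(a)=o(b)=2$, $o(x)=\infty$, $m(a,b)=4$, $m(b,x)=\infty$, $m(a,x)=2$, and $Y=Y'=\{a\}$, so $Y^\perp=\{x\}$. The element $v=bab\,x\,bab$ is reduced and minimal in $vD_Y$. It satisfies $vav^{-1}=a$, since $x$ commutes with $a$ and $bab\cdot a\cdot bab=a$ in the dihedral group of order $8$. But $\hat v=(bab)^2=1$, so $v\hat v^{-1}=v\notin D_{Y^\perp}$. In fact no factorisation $v=urh$ with $u\in D_{Y^\perp}$, $r\in D_2$, $h\in D_Y$ exists: applying $g\mapsto\hat g$ forces $rh=1$, hence $u=v$. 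The theorem does hold here, since $v=r_{Y,b}\,x\,r_{Y,b}^{-1}$. However, the Coxeter and higher-order pieces of a ribbon interleave and cannot be separated as you propose.

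Moreover, the Krammer path for $\hat v$ may contain loop ribbons $r_{Z,t}$ with $\delta_U(t)=t$ (such as $r_{Y,b}=bab$ above), which are not edges of $G$. The verification of condition (b) in the proof of Theorem~\ref{equivalentconditions} uses $t\neq t'$. So it gives no control over whether such a $U$ meets a higher-order generator of $Y$.

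What is missing is the key lemma: a final order-$2$ letter $s$ of $v$ forces the component $U$ of $\Gamma_{Y\cup\{s\}}$ containing $s$ to be spherical, and $r_{Y,s}$ to be a suffix of $v$ with additive length.
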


\begin{proof}
It is obvious that $\Ribb(Y,Y') \cdot D_Y\subseteq\Conj(D_Y,D_{Y'})$, let us see the other inclusion. Let $g\in D$ be such that $gD_Yg^{-1}=D_{Y'}$. We choose a syllabic reduced word representing~$g$, which we denote $w$. We shall make some assumptions that will simplify our arguments. 

\begin{claim}
    \label{claim:noSuffixInDY}
    We may assume that neither $w$ nor any reduced equivalent word contains a suffix which represents an element of $D_Y$.
\end{claim}

\begin{proof}[Proof of Claim~\ref{claim:noSuffixInDY}]
    Let us assume that, up to passing to an equivalent reduced representative, $w$ can be written as $uv$, for some words $u$ and $v$ such that $v$ is non empty and $v$ is the maximal suffix satisfying $\overline{v}\in D_Y$. Then $D_{Y'}=g D_Y g^{-1}=\overline u D_Y \overline u^{-1}$, where no suffix of $u$ represents an element in $D_Y$. If $\overline{u}\in \Ribb(Y,Y') \cdot D_Y$, then $g\in \Ribb(Y,Y') \cdot D_Y$ as well, which is what we wanted. 
\end{proof}

\begin{claim}
    \label{claim:noSuffixInRibb}
    We may assume that neither $w$ nor any reduced equivalent word contains a suffix which represents an element of $\Ribb(Y,Z)$, for some $Z\subseteq X$.  
\end{claim}

\begin{proof}[Proof of Claim~\ref{claim:noSuffixInRibb}]
    Let us assume that, up to passing to an equivalent reduced representative, there exists $Z\subseteq X$ such that $w$ can be written as $uv$, for some words $u$ and $v$ such that $v$ is non empty and $\overline{v}\in \Ribb(Y,Z)$. Then $D_{Y'}=g D_Y g^{-1}=\overline u D_Z \overline u^{-1}$. If $\overline{u}\in \Ribb(Z,Y') \cdot D_Z$, then by Remark~\ref{rem:slideRibbon}  $$g\in \Ribb(Z,Y') \cdot D_Z \Ribb(Y,Z)= \Ribb(Y,Y') \cdot D_Y,  $$ which is what we wanted. 
\end{proof}

%We can also assume that $g$ has minimal length in the double coset $D_Y'gD_Y$ \maria{Explain why.}.

\begin{claim}
    \label{claim:minLenInDoubleCoset}
    We may assume that $g$ has minimal length in the double coset $D_{Y'}gD_Y$.
\end{claim}

\begin{proof}[Proof of Claim~\ref{claim:minLenInDoubleCoset}]
    Let us assume that there are $\alpha\in D_{Y'}$ and $\beta\in D_Y$ such that $g_0=\alpha g \beta$ has length shorter than $g$. Then $g_0$ conjugates $D_Y$ to $D_{Y'}$ as well. By the claim of the theorem for $g_0$, the latter decomposes as $rh$ for some $r\in\Ribb(Y,Y')$ and $h\in D_Y$. Then we have
    \[
    g=\alpha^{-1}g_0\beta^{-1}=\alpha^{-1}rh\beta^{-1}=r\alpha'h\beta^{-1}\in \Ribb(Y,Y')\cdot D_Y,
    \]
    where $\alpha'\in D_Y$ is such that $\alpha^{-1}r=r\alpha'$, as provided in Remark~\ref{rem:slideRibbon}.
\end{proof} 

We shall distinguish three cases.\medskip

\emph{Case $1$. Every generator $y\in Y$ has order $2$.} 

We first assume that the support of $g$ does not contain any letter of infinite order.  If $D_Y$ is finite, then the element $w_Y$ of maximum length in $D_Y$ exists. We know that $\Supp(w_Y)=Y$ and $\Supp(gw_Yg^{-1})=Y'$. Since $g$ has minimal length in the coset $gD_Y$, then $$|\Supp(g)\cup \Supp(w_Y)|> |\Supp(gw_Yg^{-1})|=|Y'|.$$ This means that there must be letters that cancel when reducing $ww_Yw^{-1}$ to a reduced representative. Since $ww_Y$ is reduced, then by Proposition~\ref{Proposition 2.8} we have that $l(gw_Yg^{-1})< l(gw_Y)+l(g)$ (with the usual word length). This is only possible if $g w_Y$ has a final letter~$t$ which cancels with~$t^{-1}$ at the beginning of~$g^{-1}$, so~$t$ is also a final letter of~$g$.

We will now use the theory of Artin monoids in our argumentation. By Property~$\mathcal{D}$, we can obtain a word finishing in $t$ using only elementary $M$-transformations of type II on the word $ww_Y$. This means that if we consider the word $ww_Y$ in the corresponding Artin monoid (for every letter of $ww_Y$, we choose its positive representative), we have a sequence of relations in the monoid that give us a word ending in $t$ if and only if we have the same sequence in the respective Coxeter group. We know that $g w_Y$ has as suffixes both $w_Y$ and $t$. Then it also has as suffix the least common multiple of the two, which is by definition $w_{Y\cup \{t\}}=r_{Y,t}w_Y$, so $g$ has $r_{Y,t}$ as suffix, contradicting Claim~\ref{claim:noSuffixInRibb} (this proof is analogous to \cite[Lemma 9]{Cumplido2019b}).\par

Let us now assume that $D_Y$ is not finite. For every final letter~$t$ of~$g$, we know by hypothesis that there is at least one $s\in Y$ such that $st\neq ts$. Take $Z\subset Y$ such that~$w_Z$ exists but $w_{Z\cup \{s\}}$ does not exist (notice that~$w_Z$ exists in the monoid if an only if~$D_Z$ is finite \cite[Theorem~5.6]{brieskornsaito}). Define $w_s:= w'w_Zs$ where $w'$ is the Coxeter element $s_1\cdots s_r$ of $Y\setminus(Z\cup \{s\})$. Again, by following the same argument of the previous case, we have that in the Artin monoid $gw_s$ and~$g$ share a final letter~$t$. Since $gw_s\succeq s$, we know that $gw_s$ has as suffix the least common multiple of~$s$ and~$t$, which is $\cdots sts$ of length at least $3$. Then $gw'w_Z\succeq t$ and then it has as suffix the least common multiple of~$w_Z$ and~$t$, which by definition is $w_{Z\cup\{t\}}$. Now, $gw'w_Zt^{-1} \succeq s$, and $w_{Z\cup\{t\}}t^{-1} \succeq w_Z$, which implies that $gw'w_Zt^{-1}\succeq w_{Z\cup \{s\}}$. But $w_{Z\cup \{s\}}$ does not exist, whence a contradiction.

\medskip

Now, if $\Supp(g)$ contains letters of infinite order, these letters only participate in commutation relations. This means that in the process of reducing $wvw^{-1}$ (where $v$ is~$w_Y$ or~$w_s$ in the previous cases depending on the finiteness of $D_Y$), the letters of infinite of order cancel each other. Let~$x$ be the rightmost infinite order letter of~$w$. Choose~$w$ to be such that~$x$ is as leftmost as possible and decompose $w=w_1xw_2$, so $l(wvw_2^{-1}x^{-1})< l(wv)+ l(w_2^{-1}x^{-1})$. This means that $x$ commutes with a reduced representative of $w_2vw_2^{-1}$.
We know by the previous cases that $l(w_2vw_2^{-1})=l(w_2v) +l(w_2)$ because otherwise we arrive to a contradiction. Then $w_2vw_2^{-1}$ is already reduced, and in particular~$x$ commutes with~$w_2$ and~$v$. As $\Supp(v)=Y$, this would imply that~$x$ is a final letter of~$g$ and a ribbon, which is a contradiction.  

\medskip

\emph{Case $2$. The subgraph $\Gamma_Y$ is connected and there exists one generator $y\in Y$ with order greater than $2$.} We note that in this case, necessarily $Y=Y'$ and what we want to show is that $g\in D_{Y^\perp}\times D_Y$. We write $g=g_1g_2g_3$ where $g_1,g_3\in D_Y$ and $g_2$ is of minimal syllabic length in the double coset $D_YgD_Y$. By \cite[Lemma 6.1]{PS23}, we have
\[
D_Y=D_Y\cap g_2D_Yg_2^{-1}=D_{Y\cap g_2Yg_2^{-1}}.
\]
Therefore, $Y=g_2Yg_2^{-1}$ and in particular by Claim \ref{claim:centraliser}, $y=g_2yg_2^{-1}$ and $g_2\in D_{\{y\}^\perp}$.

We will prove the result by proving that $g_2\in D_{Y^\perp}$ by induction on the cardinality of~$Y$. Note that $D_{Y^\perp}=\bigcap_{z\in Y}D_{\{z\}^\perp}$.

If $|Y|=1$, i.e. $Y=\{y\}$, then the reasoning above shows that we can write \mbox{$g=g_2(g_1g_3$)} where $g_2\in D_{Y^\perp}$ and $g_1g_3\in D_Y$, thus proving the result. Now assume that $Y=\{y,z_1,\dots, z_{m}\}$ and the result is true for cardinal $m$. Since $\Gamma_Y$ is connected, let us assume without loss of generality that $m(y,z_m)=\infty$. Therefore, $z_m\notin \Supp(g_2)$. This implies that $z_m=g_2 z_m g_2^{-1}$. Therefore, $Y\setminus \{z_m\}=g_2(Y\setminus \{z_m\})g_2^{-1}$. We distinguish two possibilities. If $o(z_m)>2$, then again by Claim \ref{claim:centraliser}, we have $g_2\in D_{\{z_m\}^\perp}$ and by induction hypothesis we obtain the result. If $o(z_m)=2$, then by Case 1 we have $g_2\in \Ribb(z_m,z_m)$, so $g_2$ can be written as a product of elementary ribbons $g_2=r_1\cdots r_k$. Assume that there is some $i\in\{1,\dots,k\}$ such that $r_i\notin \{z_m\}^\perp$. This implies that $z_m\in\Supp(g_2)$, which is a contradiction. Therefore, $g_2\in D_{\{z_m\}^\perp}$ and as before by induction hypothesis we obtain the result.

\medskip

\emph{Case $3$. The subgraph $\Gamma_Y$ has several connected components and there is at least one generator $y\in Y$ of order greater than $2$.}
It suffices to consider the case that $\Gamma_Y$ has two non-empty connected components. Let them be $\Gamma_A$ and $\Gamma_B$ with $A,B\subseteq Y$, so $D_Y=D_A\times D_B$. Since conjugation preserves the connected components, we have $\Gamma_{Y'}=\Gamma_{A'}\sqcup \Gamma_{B'}$ and $D_{Y'}=D_{A'}\times D_{B'}$, where $D_{A'}=gD_Ag^{-1}$ and $D_{B'}=gD_Bg^{-1}$. Let us assume that $y\in A$. Then by Claim \ref{claim:fixedcomponents}, $A=A'$ and by Cases 1 and 2 above, $g\in D_A\times D_{A^\perp}$ and $g\in\Ribb(B,B')\cdot D_B=D_{B'}\cdot\Ribb(B,B')$.

Let us write $g=g_1g_2$ with $g_1\in D_{B'}$ and $g_2\in \Ribb(B,B')$. Since every letter in~$A$ commutes with every letter in $B$, we have $D_{B'}\subseteq D_{A^\perp}$. Therefore, $g_2=g_1^{-1}g\in D_A\times D_{A^\perp}$. Let us write then $g_2=h_1h_2=h_2h_1$ with $h_1\in D_{A^\perp}$ and $h_2\in D_A$. Note that $D_A\subseteq\Ribb(B,B)$, again because every letter in $A$ commutes with every letter in $B$. Then, we have $h_1=g_2h_2^{-1}\in\Ribb(B,B')\cdot\Ribb(B,B)\subseteq\Ribb(B,B')$.

Now, we have $g=g_1g_2=(g_1h_2)h_1$ with $g_1h_2\in D_A\times D_{B'}=D_{Y'}$ and $h_1\in D_{A^\perp}\cap\Ribb(B,B')\subseteq\Ribb(Y,Y')$. This finishes the proof.
\end{proof}

When $Y=Y'$, we obtain an immediate corollary.

\begin{corollary}
    \label{cor:normaliserStdParabolic}
    Let $(D,X)$ be a Dyer system, and let $Y\subseteq X$. Then, the normaliser of~$D_Y$ in $D$ is
    \[
    \operatorname{N}_D(D_Y)=D_Y\rtimes \Ribb(Y,Y)
    \]
where the action of $\Ribb(Y,Y)$ on $D_Y$ is given by conjugation as described above.
\end{corollary}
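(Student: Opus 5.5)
The plan is to deduce the corollary from Theorem~\ref{thm:ribbonConj} with $Y'=Y$, so that $\operatorname{N}_D(D_Y)=\Conj(D_Y,D_Y)=\Ribb(Y,Y)\cdot D_Y$. It then remains to check the conditions for an internal semidirect product: $D_Y$ is normal in $\operatorname{N}_D(D_Y)$, $\Ribb(Y,Y)$ is a subgroup, the two generate the normaliser, and they intersect trivially.

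Normality of $D_Y$ is immediate. To see that $\Ribb(Y,Y)$ is a subgroup, I will use closure of ribbons under concatenation: if $r\in\Ribb(Y,Z)$ and $r'\in\Ribb(Z,Y')$ then $r'r\in\Ribb(Y,Y')$. Inverses of elementary ribbons are elementary ribbons in the reverse direction by definition, so $r^{-1}\in\Ribb(Y',Y)$. Hence $\Ribb(Y,Y)$ is closed under products and inverses. By Remark~\ref{rem:slideRibbon}, $r D_Y r^{-1}=D_Y$ for every $r\in\Ribb(Y,Y)$, so $\Ribb(Y,Y)\subseteq\operatorname{N}_D(D_Y)$ and it acts on $D_Y$ by conjugation. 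Combined with Theorem~\ref{thm:ribbonConj}, this gives $\operatorname{N}_D(D_Y)=D_Y\cdot\Ribb(Y,Y)$.

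The main obstacle is the trivial intersection $\Ribb(Y,Y)\cap D_Y=\{\neutre\}$. My approach is to show that every $(Y,Y')$-ribbon $r$ is the unique element of minimal syllabic length in the coset $rD_Y$, in the sense of Proposition~\ref{Proposition 2.8}. An element of $D_Y$ that is minimal in its coset $D_Y$ must be $\neutre$, which would give the result.

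\begin{itemize}
\item For an elementary ribbon $r_{Y,x}=w_U^{-1}w_{U\setminus\{x\}}$ with $o(x)=2$: in the finite Coxeter group $D_U$ this is, up to inversion, the longest element of $D_U$ multiplied by the longest element of a standard parabolic subgroup. Such an element is the minimal-length representative of its coset with respect to the relevant standard parabolic subgroup, and that subgroup is the same as the one generated by $(Y\cap U)$ or by its image. Since $U$ is a connected component of $\Gamma_{Y\cup\{x\}}$, the remaining generators of $Y$ commute with $U$, which should transfer minimality from $D_{Y\cap U}$ to all of $D_Y$.
\item For $o(x)>2$, we have $r=x^{\pm1}$ with $x\notin Y$, and minimality in $xD_Y$ holds by convexity.
\end{itemize}

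Passing from elementary ribbons to products is harder, because concatenations might cancel. If that step resists, I will fall back on a cheaper argument: take $g\in\Ribb(Y,Y)\cap D_Y$ and use the syllabic support or the homomorphism $\hat{\ }$ of Remark~\ref{remark_hat}. This reduces to the Coxeter case, where the known result of Brink--Howlett, Krammer and Paris gives $\operatorname{N}_W(W_Y)=W_Y\rtimes\Ribb(Y,Y)$. The generators of order greater than $2$ lie in $Y^\perp$ by Remark~\ref{perpinribb}, and I will handle them via exponent sums, since they commute with $D_Y$ and are disjoint from $Y$. Given the trivial intersection, the semidirect product decomposition follows.
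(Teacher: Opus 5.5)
Your reduction matches the paper's, which simply calls the corollary immediate from Theorem~\ref{thm:ribbonConj} with $Y'=Y$. Your checks that $\Ribb(Y,Y)$ is a subgroup of $\operatorname{N}_D(D_Y)$ and that it generates the normaliser together with $D_Y$ are fine. You are right that $\Ribb(Y,Y)\cap D_Y=\{\neutre\}$ is the one point that needs an argument; the paper does not spell it out.

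Your treatment of that point has a gap. Your primary route stops exactly at the hard step, namely coset-minimality of products of elementary ribbons. Your fallback rests on a false claim: the generators of order greater than $2$ occurring in a $(Y,Y)$-ribbon need not lie in $Y^\perp$, nor commute with $D_Y$. Remark~\ref{perpinribb} only gives $Y^\perp\subseteq\Ribb(Y,Y)$. For example, let $s,t$ have order $2$ with $m(s,t)=3$, let $x$ have infinite order with $m(s,x)=\infty$ and $m(t,x)=2$, and let $Y=\{s\}$. Applying first $r_{\{s\},t}=st$ (from $\{s\}$ to $\{t\}$), then $r_{\{t\},x}=x$, then $r_{\{t\},s}=ts$ gives the $(\{s\},\{s\})$-ribbon $ts\,x\,st$, and here $x$ does not commute with $s$. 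Exponent sums are also too weak to conclude. The conditions $\hat g=\neutre$ and vanishing exponent sums do not force $g=\neutre$: take $g=yty^{-1}t$ with $o(y)=\infty$ and $m(t,y)=\infty$.

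What does work is the support version of your observation that these letters are disjoint from $Y$.
\begin{enumerate}
\item Along a ribbon path $Y=Y^1,\dots,Y^{q+1}=Y$, elementary ribbons only move generators of order $2$, so $Y^i_{p\cup\infty}=Y_{p\cup\infty}$ for all $i$.
\item Hence each factor of order greater than $2$ is $x^{\pm1}$ with $x\notin Y$. Every other factor has support in $X_2$, since a connected spherical component containing an involution consists of involutions.
\item So $g\in\Ribb(Y,Y)\cap D_Y$ lies in $D_{X\setminus Y_{p\cup\infty}}\cap D_Y=D_{Y_2}$ by convexity, whence $g=\hat g$.
\item The homomorphism $g\mapsto\hat g$ of Remark~\ref{remark_hat} kills the factors of order greater than $2$ and fixes the others.
\item Each remaining factor $r_{Y^i,x}$ is the elementary ribbon $r_{Y^i_2,x}$ of the Coxeter system $(D_{X_2},X_2)$. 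This holds because the component of $x$ in $\Gamma_{Y^i\cup\{x\}}$ coincides with its component in $\Gamma_{Y^i_2\cup\{x\}}$.
\item Thus $g$ is a $(Y_2,Y_2)$-ribbon of $D_{X_2}$ lying in $D_{Y_2}$. Such a ribbon maps the simple roots indexed by $Y_2$ onto themselves (cf.\ \cite[Section 3.1]{K94}), and the only element of $D_{Y_2}$ doing so is $\neutre$.
\end{enumerate}
This gives the trivial intersection without proving coset-minimality of arbitrary ribbons.
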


\bibliography{bib}

\Addresses

\end{document}